\documentclass[12pt,reqno]{amsart}%
\usepackage{amsfonts}
\usepackage{amsmath}
\usepackage{amssymb}
\usepackage{color}
\usepackage{graphicx}%
\providecommand{\U}[1]{\protect\rule{.1in}{.1in}}
\newtheorem{theorem}{Theorem}[section]
\theoremstyle{plain}

\newtheorem{lemma}{Lemma}[section]

\newtheorem{remark}{Remark}

\numberwithin{equation}{section}
\begin{document}
\title[Optimal dimension-dependent estimate of the discrete Riesz Transforms]{Optimal dimension-dependent $\ell^p$ and $\ell^{1,\infty}$ estimates of the discrete Riesz Transforms}

\author{Junjie Shao}
\address[Junjie Shao]{Laboratory of Mathematics and Complex Systems (Ministry of Education), School of Mathematical Sciences, Beijing Normal University, Beijing, 100875, China}
\email{jjshao@mail.bnu.edu.cn}

\author{Hanli Tang}
\address[Hanli Tang]{Laboratory of Mathematics and Complex Systems (Ministry of Education), School of Mathematical Sciences, Beijing Normal University, Beijing, 100875, China}
\email{hltang@bnu.edu.cn}

\author{Zewei Xu}
\address[Zewei Xu]{Laboratory of Mathematics and Complex Systems (Ministry of Education), School of Mathematical Sciences, Beijing Normal University, Beijing, 100875, China}
\email{zwxu@mail.bnu.edu.cn}

\keywords{}
\thanks{The second author
was partly supported by  National Natural Science Foundation of China (Grant No.12471100).}
%\date{ }

%\subjclass[2010]{42B35, 42B37}

\begin{abstract}
In this paper, we are concerned with the optimal dimension-dependent $\ell^p$ norm of the discrete Riesz Transforms $R_{\text{dis}}^{(k)}$ on $\mathbb{Z}^d$ given by the singular convolution kernel $K_k(m)=c_d m_k/|m|^{d+1}$, where $c_d=\Gamma(\frac{d+1}{2})/\pi^{(d+1)/2}$ .

We show that for fixed $1<p<\infty$, when $d\to \infty$
$$\|R_{dis}^{\left( k \right)}\|_{\ell ^p\left( \mathbb{Z}^d \right) \rightarrow \ell ^p\left( \mathbb{Z}^d \right)}=2c_d\left( 1+\frac{\left( \sqrt{2}+o\left( 1 \right) \right) d}{2^{\frac{d}{2}}} \right) .$$
The operator norm of $R_{\text{dis}}^{(k)}$ grows super-exponentially as $d\to\infty$ since $c_d\sim(\frac{d-1}{2e\pi})^{\frac{d-1}{2}}\sqrt{\frac{d-1}{\pi}}$ by Stirling's formula, which gives a negative answer to the conjecture proposed by Ba\~{n}uelos, Kim and Kwa\'{s}nicki in \cite{BKK}. The optimal dimension-dependent $\ell^{1,\infty}$ estimate of $R_{\text{dis}}^{(k)}$ is also established.

\end{abstract}
\maketitle
\section{Introduction}
The Riesz transforms $R^{(k)}$, $k=1,\cdots,d$, which play an important part in Analysis, are defined by
$$R^{(k)}f(x)=\mathrm{p.v.} ~c_d\int_{\mathbb{R}^d}\frac{y_k}{|y|^{d+1}}f(x-y)dy,$$
where $c_d=\Gamma(\frac{d+1}{2})/\pi^{(d+1)/2}$.

The Riesz transforms are fundamental examples of Calder\'{o}n-Zygmund singular integral operators. In the particular case $d=1$,
these families has only one element, the so-called Hilbert transform $H$ on $\mathbb{R}$
$$H(f)(x)=\mathrm{p.v.}\frac{1}{\pi}\int_{\mathbb{R}}\frac{f(x-y)}{y}dy.$$
In the celebrated paper \cite{R}, Riesz proved that the Hilbert transform is a bounded operator on $L^p(\mathbb{R})$ if and only if $1<p<\infty$.

The precise value of the norm of the Hilbert transform is
\begin{equation*}
    \|H\|_{L^{p}(\mathbb{R})\rightarrow L^p(\mathbb{R})} =\cot(\frac{\pi}{2p^{\ast}}),~ \text{where}~~p^{\ast}=\max\{p,p/(p-1)\},
\end{equation*}
which was shown by Gohberg and Krupnik \cite{GK} when $p$ is a power of $2$, and by Pichorides \cite{P} and Cole (unpublished) for all $1<p<\infty$.

Iwaniec and Martin \cite{IM} using the method of rotations, proved that the norm of the Riesz transforms is also $\cot(\frac{\pi}{2p^{\ast}})$, i.e.
\begin{equation}\label{bound RT}
 \|R^{(k)}\|_{L^{p}(\mathbb{R}^d)\rightarrow L^p(\mathbb{R}^d)} =\cot(\frac{\pi}{2p^{\ast}}),~~~k=1,\cdots,d.
\end{equation}
An alternative, probabilistic proof of the estimate (\ref{bound RT}), based on a sharp inequality for
orthogonal martingales, was given by Ba\~{n}uelos and Wang in \cite{BW}.

In this paper, We will investigate the operator norm of the discrete Riesz Transforms. The literature on Discrete Analogues in Harmonic Analysis, which
concerns the study of discrete counterparts on $\mathbb{Z}^d$ of singular integrals and other classical operators in Harmonic Analysis on $\mathbb{R}^d$.  We refer the interested readers to the papers \cite{CZ}, \cite{SW1}, \cite{SW2}, \cite{MSW}, \cite{Pie}, \cite{Pie1}, \cite{K} and  references therein.

The discrete Hilbert transform
$$H_{dis}f(n)=\frac{1}{\pi} \sum_{m\in \mathbb{Z}\setminus \{0\}}\frac{f(n-m)}{m}$$
was introduced by Hilbert in 1909. Riesz in \cite{R} not only proved the $L^{p}$ boundedness of $H$ on $L^{p}(\mathbb{R})$ but also showed that
 $$\|H_{dis}\|_{\ell^{p}(\mathbb{Z})\rightarrow \ell^p(\mathbb{Z})}=C\|H\|_{L^{p}(\mathbb{R})\rightarrow L^p(\mathbb{R})},~~~~\|H\|_{L^{p}(\mathbb{R})\rightarrow L^p(\mathbb{R})}\leq \|H_{dis}\|_{\ell^{p}(\mathbb{Z})\rightarrow \ell^p(\mathbb{Z})}$$
 with some constant $C>0$.

 The long-standing open problem about the operator norm of $H_{dis}$, which is initiated by an erroneous proof of
Titchmarsh in \cite{T1}, \cite{T2}, was proved by Laeng \cite{La} for $p=2^k$ or $p=\frac{2^k}{2^k-1},~~~~k=1,2,\cdots$ and finally solved by Ba\~{n}uelos and Kwa\'{s}nicki \cite{BK} for all $1<p<\infty$ until 2019. In fact, they proved
\begin{equation}\label{norm of HT}
\|H_{dis}\|_{\ell^{p}(\mathbb{Z})\rightarrow \ell^p(\mathbb{Z})}=\|H\|_{L^{p}(\mathbb{R})\rightarrow L^p(\mathbb{R})} =\cot(\frac{\pi}{2p^{\ast}}).
\end{equation}

More precisely, Ba\~{n}uelos and Kwa\'{s}nicki \cite{BK} constructed the probabilistic discrete Hilbert transform
$$T_{\mathbb{H}}f(n)=\sum_{m\in \mathbb{Z}\setminus\{0\}}\mathbb{K}_{\mathbb{H}}(m)f(n-m)$$
 with
$$\mathbb{K}_{\mathbb{H}}(m)=\frac{1}{\pi m}\left(1+\int_0^{\infty}\frac{2y^3}{(y^2+\pi^2m^2)\sinh^2(y)}dy\right),$$
and proved that for $1<p<\infty$,
$$\|T_{\mathbb{H}}\|_{\ell^p(\mathbb{Z})\rightarrow \ell^p(\mathbb{Z})}\leq \cot(\frac{\pi}{2p^{\ast}}).$$
Then they established the relationship between the discrete Hilbert transform $H_{dis}$ and the probabilistic discrete Hilbert transform by
$$H_{dis}f(n)=(T_{\mathbb{H}}\ast \mathcal{P})f(n)$$
with $\mathcal{P}$ being a probalility kernel on $\mathbb{Z}$ and $\ast$ denoting the convolution. Then
$$\|H\|_{\ell^p(\mathbb{Z})\rightarrow \ell^p(\mathbb{Z})}\leq \|T_{\mathbb{H}}\|_{l^p(\mathbb{Z})\rightarrow l^p(\mathbb{Z})}\leq \cot(\frac{\pi}{2p^{\ast}}),$$
which implies (\ref{norm of HT}).

Although the operator norms of the Hilbert transform and its discrete version are the same,  the operator norm of the discrete Riesz transforms
\begin{equation}\label{def}
R_{dis}^{(k)}f(n)=c_d\sum_{m\in \mathbb{Z}^{d}\setminus\{0\}}\frac{m_k}{|m|^{d+1}}f(n-m).
\end{equation}
is still an open problem and there are few relevant results about the problem. Recently, Ba\~{n}uelos, Kim and Kwa\'{s}nicki \cite{BKK}  proved that
$$\cot(\frac{\pi}{2p^{\ast}})\leq \|R_{dis}^{(k)}\|_{\ell^p(\mathbb{Z}^d)\rightarrow \ell^p(\mathbb{Z}^d)}\leq \cot(\frac{\pi}{2p^{\ast}})+C_d.$$
But there is no more information about the positive constant except it depends only on the dimension $d$.

At the same time, Ba\~{n}uelos, Kim and Kwa\'{s}nicki extended the probabilistic discrete Hilbert transform to higher dimensional version, which is the probabilistic discrete Riesz transforms
$$T_{\mathbb{R}^{(k)}}(f)(n) = \sum_{m \in \mathbb{Z}^d \setminus \{0\}} \mathbb{K}_{\mathbb{R}^{(k)}}(n-m) f(m),$$
where the kernel
\[
\mathbb{K}_{\mathbb{R}^{(k)}}(m) = -4 \int_{\mathbb{R}^d} \int_0^\infty \frac{1}{h(x, y)} \frac{\partial p_0}{\partial x_k}(x, y) \frac{\partial}{\partial y}(y p_m)(x, y) \, dy \, dx, \]
$p_m(x,y)=p(x-m,y)$ and $p(x,y)$ is the Poisson kernel of the upper half-space $\mathbb{R}^d\times \mathbb{R}_{+}$. They proved that  $\|T_{\mathbb{R}^{(k)}}\|_{\ell^p(\mathbb{Z}^d)\rightarrow \ell^p(\mathbb{Z}^d)}$ is still $\cot(\frac{\pi}{2p^{\ast}})$.

As the one dimensional case, Ba\~{n}uelos, Kim and Kwa\'{s}nicki
tried to establish the relationship between the probabilistic discrete Riesz transforms $T_{\mathbb{R}^{(k)}}$ and the discrete Riesz transforms $R_{dis}^{(k)}$. They proposed a question (see Question 8.7 in \cite{BKK}) that is there a probability kernel $\mathcal{P}^{(k)}$ on $\mathbb{Z}^d$ such that
$$R_{dis}^{(k)}f(n)=\sum_{m\in \mathbb{Z}^d}\mathcal{P}^{(k)}(n-m)T_{\mathbb{R}^{(k)}}f(m).$$
If the answer to this question is yes, then there holds
\begin{equation}\label{conjecture}
\|R_{dis}^{(k)}\|_{\ell^p(\mathbb{Z}^d)\rightarrow \ell^p(\mathbb{Z}^d)}= \cot(\frac{\pi}{2p^{\ast}}),
\end{equation}
which is a conjecture proposed by them (see Conjecture 6.4 in \cite{BKK}).

Unfortunately, the conjecture is not true. Furthermore, the weaker version (see Problem 6.5 in \cite{BKK})
\begin{equation}\label{question}
\|R_{dis}^{(k)}\|_{\ell^p(\mathbb{Z}^d)\rightarrow \ell^p(\mathbb{Z}^d)}\leq C_p
\end{equation}
for some $C_p$ independent of $d$ is also not true.

In fact
let
\begin{equation*}
    f(n) =
    \begin{cases}
        1,  & n=0 \\
        0, & n\neq 0
    \end{cases},
\end{equation*}
then $R_{dis}^{(k)}f(n)=c_d\frac{n_k}{|n|^{d+1}}$ when $n\neq 0$, and $R_{dis}^{(k)}f(0)=0$. Thus
$$\|R_{dis}^{(k)}\|_{\ell^p(\mathbb{Z}^d)\rightarrow \ell^p(\mathbb{Z}^d)}\geq c_d\left(\sum_{n\in{\mathbb{Z}^d\setminus \{0\}}}\frac{|n_k|^p}{|n|^{p(d+1)}}\right)^{\frac{1}{p}}\geq 2^{\frac{1}{p}}c_d.$$
Recall that $c_d=\Gamma(\frac{d+1}{2})/\pi^{(d+1)/2}$, then $c_d\sim (\frac{d-1}{2e\pi})^{\frac{d-1}{2}}\sqrt{\frac{d-1}{\pi}}$ by Stirling's formula, which implies (\ref{conjecture}) and (\ref{question}) are not true. Thus it is natural to ask how does the operator norm of $R_{dis}^{(k)}$ depend on the dimension $d$, which is the first question we try to answer in the paper.

Our second goal is to investigate the optimal dimension-dependent $\ell^{1,\infty}$ estimate of the discrete Riesz transforms $R_{dis}^{(k)}$. It is a classical result of Calder\'{o}n and Zygmund \cite{CZ} that the Riesz transforms are bounded operators from $L^{1}(\mathbb{R}^d)$ to $L^{1,\infty}(\mathbb{R}^d)$. In fact, their result implies
\begin{equation}\label{weak 1}
\|R^{(k)}f\|_{L^{1,\infty}(\mathbb{R}^d)}\leq C(d)\|f\|_{L^{1}(\mathbb{R}^d)}.
\end{equation}
The proof of (\ref{weak 1}) is based on the Calder\'{o}n-Zygmund decomposition, by which one can obtain an exponential dependence on the dimension of the constant $C(d)$. The
question whether the constant $C(d)$ can be taken to be dimension free was proposed by Stein in his 1986 Berkeley ICM lecture (\cite{St}, page 202). At present the best result
in this direction is that of Janakiraman \cite{Ja}(see also \cite{SS}), who show that (\ref{weak 1}) hold for $C(d)=c\ln d$ with some absolute constant $c>0$.

However, such dimension-dependent $\ell^{1,\infty}$ estimate for the discrete Riesz transforms $R_{dis}^{(k)}$ is not known in the literature. Denote
$$\|R_{dis}^{(k)}\|_{\ell^1(\mathbb{Z}^d)\rightarrow \ell^{1,\infty}(\mathbb{Z}^d)}=\sup_{0 \not \equiv f \in \ell^1(\mathbb{Z}^d)}\frac{\|R_{dis}^{(k)}f\|_{\ell^{1,\infty}(\mathbb{Z}^d)}}{\|f\|_{\ell^1(\mathbb{Z}^d) }}.$$
Here we can provide a lower bound for $\|R_{dis}^{(k)}\|_{\ell^1(\mathbb{Z}^d)\rightarrow \ell^{1,\infty}(\mathbb{Z}^d)}$ by choose the same test function
\begin{equation*}
    f(n) =
    \begin{cases}
        1,  & n=0 \\
        0, & n\neq 0
    \end{cases}.
\end{equation*}
Thus
\begin{align*}
 & \|R_{dis}^{(k)}f\|_{\ell^{1,\infty}(\mathbb{Z}^d)}=\sup_{\lambda>0}\sharp \left\{c_d\frac{|n_k|}{|n|^{d+1}}>\lambda \right\}\lambda \\
 & \geq \sharp \left\{c_d\frac{|n_k|}{|n|^{d+1}}>(1-\epsilon)c_d \right\}(1-\epsilon)c_d\geq 2(1-\epsilon)c_d
\end{align*}
for any $\epsilon>0$, which implies
$$\|R_{dis}^{(k)}\|_{\ell^1(\mathbb{Z}^d)\rightarrow \ell^{1,\infty}(\mathbb{Z}^d)}\geq 2c_d.$$
So unlike the continuous version we can not expect a dimensional free upper bound for $\|R_{dis}^{(k)}\|_{\ell^1(\mathbb{Z}^d)\rightarrow \ell^{1,\infty}(\mathbb{Z}^d)}$.

In this paper, we will establish the optimal dimension-dependent $\ell^p$ and $\ell^{1,\infty}$ estimates for the discrete Riesz transforms $R_{dis}^{(k)}$. First we investigate the $\ell^2$ norm of $R_{dis}^{(k)}$, where we can take advantage of the Fourier multipliers of related operators.
\vskip0.3cm
\begin{theorem}\label{l2}
Let $R_{dis}^{(k)}$ be the discrete Riesz transforms in (\ref{def}). For $d\geq 2$ and $k=1,\cdots,d$, there holds
$$
2c_d\left( 1+\frac{\sqrt{2}\left( d-1 \right)}{2^{\frac{d}{2}}}-\frac{1}{3^d}-\frac{6\left( d-1 \right)}{10^{\frac{d+1}{2}}} \right) \leq \|R_{dis}^{\left( k \right)}\|_{\ell ^2\left( \mathbb{Z}^d \right) \rightarrow \ell ^2\left( \mathbb{Z}^d \right)}\leq 2c_d\left( 1+\frac{\alpha d}{2^{\frac{d}{2}}} \right)
$$
for some absolute constant $\alpha>0$.
In particular, when $d\to \infty$
$$
\|R_{dis}^{\left( k \right)}\|_{\ell ^2\left( \mathbb{Z}^d \right) \rightarrow \ell ^2\left( \mathbb{Z}^d \right)}=2c_d\left( 1+\frac{\left( \sqrt{2}+o\left( 1 \right) \right) d}{2^{\frac{d}{2}}} \right) .
$$
\end{theorem}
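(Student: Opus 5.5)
Since $R_{dis}^{(k)}$ is a convolution operator on $\mathbb{Z}^d$, its $\ell^2$ norm equals the sup norm of its Fourier multiplier
$$m_k(\xi)=c_d\sum_{m\neq 0}\frac{m_k}{|m|^{d+1}}e^{-2\pi i m\cdot\xi}=-2ic_d\sum_{m\neq0,\,m_k>0}\frac{m_k}{|m|^{d+1}}\sin(2\pi m\cdot\xi),\qquad \xi\in\mathbb{T}^d .$$
(The series converges conditionally away from $\xi=0$; one can sum over expanding cubes or use an Abel/Poisson regularization.) The plan is to separate the kernel into its dominant part on the $2d$ unit vectors $\pm e_j$ and a tail. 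The term with $m=\pm e_k$ contributes $2c_d\sin(2\pi\xi_k)$ in absolute value, which already gives the main term $2c_d$. The next shell is $|m|^2=2$, consisting of the vectors $\pm e_k\pm e_j$ with $j\neq k$. These have weight $1/2^{(d+1)/2}$, and there are $4(d-1)$ of them with $m_k=\pm1$. Their contribution is
$$2c_d\cdot 2^{-(d+1)/2}\sum_{j\neq k}2\sin(2\pi\xi_k)\cos(2\pi\xi_j).$$
At $\xi_k=1/4$ and $\xi_j=0$ for $j\neq k$, this gives $2c_d\cdot 2(d-1)/2^{(d+1)/2}=2c_d\sqrt2(d-1)/2^{d/2}$. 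This is exactly the second-order term in the theorem, so the extremal point should be $\xi^*=(0,\dots,\tfrac14,\dots,0)$.

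\textbf{Lower bound.} I will evaluate $|m_k(\xi^*)|$ exactly as a lattice sum, namely $2c_d\sum_{m_k>0}\frac{m_k}{|m|^{d+1}}\sin(\pi m_k/2)$. Terms with $m_k$ even vanish. I keep the shells $|m|^2=1$ and $|m|^2=2$, which contribute exactly $1$ and $\sqrt2(d-1)/2^{d/2}$ after factoring out $2c_d$. For the remainder I bound from below by the negative terms only. With $m_k=3$, the sign $\sin(3\pi/2)=-1$ and the smallest $|m|$ is $3$, giving $-3/3^{d+1}=-1/3^d$. Vectors with $m_k=1$ and $|m|^2\ge3$ all have positive sign, so they can be discarded. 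Terms with $m_k=3$ and other coordinates nonzero give at most $-3\cdot 2(d-1)/10^{(d+1)/2}$, since $|m|^2=10$ for $m=3e_k\pm e_j$. Terms with larger $m_k$ or more nonzero coordinates should be absorbable: grouping $m_k\equiv1$ and $m_k\equiv3 \pmod 4$ in pairs, or comparing the remaining negative terms with positive ones, should handle them. This fixes the constants $1/3^d$ and $6(d-1)/10^{(d+1)/2}$. The pairing argument needs some care, but it is a finite check of monotonicity in $m_k$ for fixed transverse part $m'$. For each fixed $m'$, the alternating series $\sum_{m_k\text{ odd}}\pm m_k/(m_k^2+|m'|^2)^{(d+1)/2}$ has eventually decreasing terms, and this gives the required sign control.

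\textbf{Upper bound.} This is the main obstacle, because it must hold uniformly in $\xi$. I bound
$$|m_k(\xi)|\le 2c_d\Big(|\sin 2\pi\xi_k|\Big(1+2^{-(d+1)/2}\sum_{j\ne k}2|\cos2\pi\xi_j|\Big)+\Big|\sum_{|m|^2\ge3}\cdots\Big|\Big).$$
The first bracket is at most $1+\sqrt2(d-1)/2^{d/2}$. The tail is the difficulty, since $\sum|m_k|/|m|^{d+1}$ over $|m|^2\ge3$ is not absolutely convergent in a dimension-free way near large $|m|$: its count grows like $r^{d-1}$, so it is only logarithmically divergent, i.e.\ conditionally convergent. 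I would split the tail at $|m|\le 2$ (say) and $|m|>2$. For $3\le|m|^2\le 4$ I would count directly: there are $O(d^2)$ vectors of norm $\sqrt3$, contributing $O(d^2 3^{-(d+1)/2})\ll d2^{-d/2}$, and similarly at norm $2$. For $|m|>2$ I would compare the discrete sum with the continuous Riesz kernel. The continuous multiplier $-i\xi_k/|\xi|$ is bounded by $1$, and $c_d\int_{|y|>r}$ of the smoothed kernel has size $O(c_d\,\omega_{d-1}\cdot\text{(small)})$. Here I use $c_d\,|S^{d-1}|\approx\sqrt{d}$-type identities: $c_d\int_{S^{d-1}}|\theta_k|\,d\sigma$ is a constant multiple. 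A cleaner alternative is to write $|m|^{-(d+1)}=\frac{1}{\Gamma((d+1)/2)}\int_0^\infty t^{(d-1)/2}e^{-t|m|^2}dt$. This factorizes the sum into one-dimensional theta-type products $\prod_j\theta(t,\xi_j)$ times $\sum m_k e^{-tm_k^2}\sin(2\pi m_k\xi_k)$. Splitting the $t$-integral then gives the following. For $t\ge t_0$, the products are dominated by the unit-vector and norm-$\sqrt2$ terms plus $O(e^{-3t})$ corrections, which yields the $O(d\,2^{-d/2})$ error. For small $t$, Poisson summation turns the sum into a continuous Riesz multiplier (bounded by an absolute constant) plus exponentially small terms. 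In both regimes, multiplying by $c_d$ and the $\Gamma$-normalization and using that $c_d$ is huge should show the small-$t$ part is $O(1)\ll c_d d2^{-d/2}$. Collecting everything gives $2c_d(1+\alpha d/2^{d/2})$. Combining both bounds yields the asymptotic statement.
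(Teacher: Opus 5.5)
\paragraph{Upper bound.} In its subordinated form your plan is the paper's argument: Gamma integral for $|m|^{-(d+1)}$, factorization $S(t,\xi)=S_1(t,\xi_1)\prod_{j\ge2}S_2(t,\xi_j)$, Poisson summation for $t\le 1$, and expansion of the product for $t\ge 1$. Two claims in your sketch are inaccurate but harmless.
\begin{itemize}
\item The small-$t$ part is not $O(1)$. When $\rho(\xi_j)$ is near $1/2$, the second Poisson term in $S_2(t,\xi_j)$ is as large as the first, so you only get something like $9\sqrt2\,(3+4/\pi^2)^{d-1}$. That is still $o(c_d\,d\,2^{-d/2})$.
\item The large-$t$ corrections are not $O(e^{-3t})$ uniformly in $d$. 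They carry a factor $d^2(1+3e^{-t})^{d}$, which you must neutralize by splitting the $t$-integral near $t=2\ln d$.
\end{itemize}

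\paragraph{The gap is in the lower bound.} Keeping all negative terms cannot work. For fixed $m_k$, the sum over $m'\in\mathbb{Z}^{d-1}$ of $m_k|m|^{-(d+1)}$ is of order $1/m_k$, so the $m_k\equiv 3\pmod 4$ part diverges and some grouping is forced.

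Your column grouping fixes $m'$ with $r=|m'|^2$ and sums the alternating series over odd $m_k>0$. It yields exactly $1-3^{-d}$ from $r=0$ and $2(d-1)\bigl(2^{-\frac{d+1}{2}}-3\cdot 10^{-\frac{d+1}{2}}\bigr)$ from $r=1$. But this gives the theorem only if every column with $r\ge 2$ has nonnegative sum.

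Set $f_r(x)=x(x^2+r)^{-\frac{d+1}{2}}$. Then $f_r'(x)=(x^2+r)^{-\frac{d+3}{2}}(r-dx^2)$, so for $r>d$ the terms increase up to $x\approx\sqrt{r/d}$. Moreover $f_r(3)/f_r(1)=3\bigl(\frac{1+r}{9+r}\bigr)^{\frac{d+1}{2}}\to 3$ as $r\to\infty$. Already for $d=2$, $r=9$ one has $f_9(1)\approx 0.032<f_9(3)\approx 0.039$.

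For these infinitely many columns the Leibniz bound $a_0-a_1$ is negative. ``Eventually decreasing'' gives convergence, not sign. So no monotonicity check supplies the sign control you need, and it is not a finite check.

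\paragraph{Missing ingredient.} You need positivity of every column, and this comes from subordination:
\[
\sum_{m_k\in\mathbb{Z}}\frac{m_k\sin(\pi m_k/2)}{(m_k^2+r)^{\frac{d+1}{2}}}=\frac{1}{\Gamma\bigl(\frac{d+1}{2}\bigr)}\int_0^\infty t^{\frac{d-1}{2}}e^{-tr}S_1\bigl(t,\tfrac14\bigr)\,dt ,
\]
together with $S_1(t,\frac14)>0$ for all $t>0$. The paper gets this positivity from the Poisson-dual series for $t\le 1$ and the alternating-series bound for $t\ge\frac1{50}$. Alternatively, Jacobi's identity gives $S_1(t,\frac14)=2e^{-t}\prod_{n\ge1}(1-e^{-8nt})^3$.

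Once you use this, it is simpler to argue as the paper does at the same point $\xi_0=\frac14e_1$:
\[
S(t,\xi_0)=S_1\bigl(t,\tfrac14\bigr)S_2(t,0)^{d-1}\ge 2e^{-t}(1-3e^{-8t})\bigl(1+2(d-1)e^{-t}\bigr)\quad\text{for all } t>0 .
\]
This holds trivially where the right side is negative. Integrating against $\pi^{-\frac{d+1}{2}}t^{\frac{d-1}{2}}$ gives the stated lower bound exactly.

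The subordinated representation is also the natural way to justify two things your plan leaves open: that the multiplier at $\xi_0$ equals your iterated, conditionally convergent lattice sum, and that this pointwise value controls the essential supremum, via continuity near $\xi_0$.
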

\vskip0.3cm
\begin{remark}
In fact, we can also provide an explicit upper bound for $\|R_{dis}^{(k)}\|_{\ell^2(\mathbb{Z}^d)\rightarrow \ell^2(\mathbb{Z}^d)}$, see (\ref{explicite upper bound}).
\end{remark}
\vskip0.3cm
For the $\ell^{1,\infty}$ and $\ell^p$ estimate of the discrete Riesz transforms $R_{dis}^{(k)}$, we can still obtain the optimal dimension-dependent constant.
\vskip0.3cm
\begin{theorem}\label{weak l1 and lp}
For $d\geq 2$ and $k=1,\cdots,d$,
there exist a absolute constant $\beta,~\gamma>0$ such that
$$2c_d\leq \|R_{dis}^{(k)}\|_{\ell^1(\mathbb{Z}^d)\rightarrow \ell^{1,\infty}(\mathbb{Z}^d)}\leq  2c_d\left( 1+\frac{\beta d}{2^{\frac{d}{2}}} \right) $$
and
$$2c_d\left( 1+\frac{\sqrt{2}\left( d-1 \right)}{2^{\frac{d}{2}}}-\frac{1}{3^d}-\frac{6\left( d-1 \right)}{10^{\frac{d+1}{2}}} \right)\leq\|R_{dis}^{(k)}\|_{\ell^p(\mathbb{Z}^d)\rightarrow \ell^p(\mathbb{Z}^d)}\leq \cot(\frac{\pi}{2p^{\ast}})+2c_d\left( 1+\frac{\gamma d}{2^{\frac{d}{2}}} \right) .$$
In particular, when $d\to \infty$
$$2c_d\leq \|R_{dis}^{(k)}\|_{\ell^1(\mathbb{Z}^d)\rightarrow \ell^{1,\infty}(\mathbb{Z}^d)}\leq2c_d\left( 1+\frac{\left( \sqrt{2}+o\left( 1 \right) \right) d}{2^{\frac{d}{2}}} \right)$$
and
$$\|R_{dis}^{\left( k \right)}\|_{\ell ^p\left( \mathbb{Z}^d \right) \rightarrow \ell ^p\left( \mathbb{Z}^d \right)}=2c_d\left( 1+\frac{\left( \sqrt{2}+o\left( 1 \right) \right) d}{2^{\frac{d}{2}}} \right) .$$

\end{theorem}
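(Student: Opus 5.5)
Split the kernel into a near part and a far part. Write $K_k=K^{\mathrm{near}}+K^{\mathrm{far}}$, where $K^{\mathrm{near}}$ is supported on the $2d$ unit vectors $\pm e_j$ (only $\pm e_k$ carry a nonzero value, namely $\pm c_d$) together with the vectors with $|m|^2=2$. The remaining points all satisfy $|m|\ge\sqrt3$, and $K^{\mathrm{far}}$ is supported on them.

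\textbf{Near part.} Convolution with $c_d(\delta_{e_k}-\delta_{-e_k})$ has norm at most $2c_d$ on every $\ell^p$, $1\le p\le\infty$, and likewise into $\ell^{1,\infty}$. The points with $|m|^2=2$ and $m_k\neq 0$ are $\pm e_k\pm e_j$ with $j\neq k$; there are $4(d-1)$ of them, each with $|K_k(m)|=c_d/2^{(d+1)/2}$. Their total $\ell^1$ mass is $4(d-1)c_d2^{-(d+1)/2}=2c_d\sqrt2(d-1)2^{-d/2}$, which is exactly the second-order term in the statement.

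\textbf{Far part.} Here I would not use the $\ell^1$ mass of $K^{\mathrm{far}}$, since the tail $\sum_{|m|\ge\sqrt3}|m|^{-d}$ diverges logarithmically. Instead I would write $K^{\mathrm{far}}$ as a discretization of the continuous Riesz kernel. Compare $K^{\mathrm{far}}$ with the kernel $\tilde K(m)=\int_{Q_m}c_d\,y_k|y|^{-d-1}\,dy$, where $Q_m$ is the unit cube centered at $m$; for $|m|\ge\sqrt3$ the singularity lies outside the cube. The associated convolution operator is $\mathbb E\circ R^{(k)}\circ\iota$, with $\iota$ extending a sequence by constants on cubes and $\mathbb E$ averaging over cubes, except that the cubes near the origin must be removed. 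Then:
\begin{enumerate}
\item The operator built from $\tilde K$ is bounded by $\cot(\pi/2p^*)$, by \eqref{bound RT}.
\item The removed cubes $Q_m$ with $|m|^2\le2$ sit close to the singularity, so I would treat them through the continuous truncated operator instead.
\item The difference $K^{\mathrm{far}}-\tilde K$ is bounded by a Taylor estimate $\lesssim c_d|m|^{-d-2}\cdot d^2$ or similar. Its $\ell^1$ norm over $|m|^2\ge3$ must be shown to be $O(c_d d2^{-d/2})$.
\end{enumerate}
This gives $\ell^p$ norm $\le\cot(\pi/2p^*)+2c_d(1+\gamma d2^{-d/2})$.

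\textbf{Weak type.} For $\ell^{1,\infty}$ I cannot add a $\cot$ term, so the contribution of $K^{\mathrm{far}}$ must be absorbed into $O(c_dd2^{-d/2})$. I would use the discrete Calderón–Zygmund decomposition at the level of the atoms together with the crude weak bound. The quasi-triangle inequality costs a factor of $2$, so instead I would bound
\[
\#\{|Tf|>\lambda\}\le\#\{|T^{\mathrm{near}}f|>(1-\eta)\lambda\}+\#\{|T^{\mathrm{far}}f|>\eta\lambda\}
\]
with $\eta=d2^{-d/2}$. This works if $T^{\mathrm{far}}$ is weak $(1,1)$ with constant $C$ satisfying $C/\eta\lesssim c_d d2^{-d/2}$, i.e.\ $C\lesssim c_dd^22^{-d}$. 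I expect this step to be the main obstacle; the continuous weak-type constant is only $\ln d$, so a refined splitting of $K^{\mathrm{far}}$ by shells would be needed. The lower bounds follow from the $\delta$ test function (weak type) and from Theorem~\ref{l2} with interpolation/duality (for the $\ell^p$ lower bound I would test $f$ supported on $\{0,\pm e_j\}$ with $p$-dependent weights). The asymptotics then follow since $\cot(\pi/2p^*)=o(c_dd2^{-d/2})$.
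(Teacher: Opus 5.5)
\paragraph{Weak-type bound: the near/far split cannot work.}
The gap here is more serious than the step you flag: the splitting itself cannot produce the bound.

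\begin{itemize}
\item Since $T^{\mathrm{near}}\delta_0=K^{\mathrm{near}}$ has weak norm at least $2c_d$, the term $\#\{|T^{\mathrm{near}}f|>(1-\eta)\lambda\}$ costs at least $2c_d/(1-\eta)$. This forces $\eta\lesssim d2^{-d/2}$.
\item On the other hand, $T^{\mathrm{far}}\delta_0=K^{\mathrm{far}}$ equals $\pm c_d3^{-(d+1)/2}$ at the $4(d-1)(d-2)$ points $\pm e_k\pm e_i\pm e_j$. So any weak constant for $T^{\mathrm{far}}$ satisfies $C\ge 4(d-1)(d-2)c_d3^{-(d+1)/2}$, far above your requirement $C\lesssim c_dd^22^{-d}$.
\item With $\eta\lesssim d2^{-d/2}$ this gives $C/\eta\gtrsim c_d\,d\,(2/3)^{d/2}$, which exceeds the target $c_dd2^{-d/2}$ by a factor of order $(4/3)^{d/2}$.
\end{itemize}

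No refinement by shells helps, because this is a lower bound for $T^{\mathrm{far}}$ itself. Conversely, $\ln d$ is no obstacle at all, since $c_d2^{-d/2}$ grows super-exponentially.

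The split that works, and the one the paper uses, is discrete versus continuous rather than near versus far. Take $F=\sum_n f(n)\mathbf 1_Q(\cdot-n)$. Then $\tilde R^{(1)}_{\mathrm{dis}}F$ is constant on each cube $n+Q$, where it equals $R^{(1)}_{\mathrm{dis}}f(n)$, so $\|R^{(1)}_{\mathrm{dis}}f\|_{\ell^{1,\infty}}=\|\tilde R^{(1)}_{\mathrm{dis}}F\|_{L^{1,\infty}}$ with no averaging needed. Your $\mathbb E\circ R^{(k)}\circ\iota$ device cannot do this, because averaging over cubes is unbounded on $L^{1,\infty}$. Now write $\tilde R^{(1)}_{\mathrm{dis}}F=R_1F+E$:
\begin{itemize}
\item $R_1$ is the continuous Riesz transform truncated to $|x|\ge1$, with weak constant $C'\log d$.
\item All of the discrete mass, near and far, sits in $E$. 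By Lemma~\ref{l1 est}, $E$ is controlled in $L^1$ (strong type) by $2c_d(1+(\sqrt2+o(1))d2^{-d/2})\|f\|_{\ell^1}$.
\end{itemize}
Only $R_1$ goes to level $\eta\lambda$, so one may take $\eta=d3^{-d/2-2}$. Then both $\|\widetilde K_1^*\|_{\ell^1}/(1-\eta)$ and $C'\log d/\eta$ are harmless.

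\paragraph{$\ell^p$ upper bound: Step 3 is the whole difficulty.}
Your skeleton (discrete kernel $=$ averaged continuous kernel $+$ an $\ell^1$ error, with the averaged part costing $\cot(\frac{\pi}{2p^*})$) matches the paper's use of Proposition 6.1 of \cite{BKK}. However, the mechanism you suggest for Step 3 does not deliver it.

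First, $\mathbb E\circ R^{(k)}\circ\iota$ has kernel $\int_Q\int_Q K(m+s-t)\,ds\,dt=\int\Phi(u)K(m+u)\,du$, with $\Phi(u)=\prod_j(1-|u_j|)_+$ supported on $[-1,1]^d$. It is not $\int_{Q_m}K$, and its support touches the origin for every $m\in\{-1,0,1\}^d$. The paper avoids your vague Step 2 by using the truncated kernel $K_1^*=K_1\chi_{\{|x|\ge1\}}$ throughout, so the near shells are simply part of $\widetilde K_1^*$.

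Second, a Taylor or mean-value comparison is only effective once $|m|$ is large compared with $d$. The derivatives of $K$ carry powers of $d$ and the window has radius of order $\sqrt d$; the paper uses the mean-value bound only for $|z|>d^2$. For $|m|\ll d$ one typically has $|m+u|^2\approx|m|^2+d/6$. So the averaged kernel is not a small perturbation of $K(m)$, and the difference must be bounded by $|K(m)|$ and the averaged term separately.

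That requires two facts you do not supply:
\begin{itemize}
\item The averaged continuous kernel carries only $\ell^1$ mass $\frac2\pi\log(d^2+\sqrt d)$ over $|z|\le d^2$. This is because $c_d\int_{\mathbb S^{d-1}}|\theta_1|\,d\sigma(\theta)=\frac2\pi$ is dimension-free.
\item The lattice sum satisfies $\sum_{1\le|z|\le d^2}|z_1||z|^{-d-1}=2+2(\sqrt2+o(1))d2^{-d/2}$ (Lemma~\ref{est sum}). Equivalently, the shells $|z|^2\ge3$, whose cardinalities grow rapidly with $d$, contribute only $o(d2^{-d/2})$ in total.
\end{itemize}
The second fact is the main computation of the paper. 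It is done via $|x|^{-d-1}=\Gamma(\frac{d+1}{2})^{-1}\int_0^\infty t^{\frac{d-1}{2}}e^{-t|x|^2}\,dt$ and theta-function bounds, and nothing in your proposal replaces it.

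Your lower bounds and the final asymptotic deduction are fine.
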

\vskip0.3cm
\begin{remark}
We would like to point out that once we establish the optimal $\ell^2$ and $\ell^{1,\infty}$ estimates, the Marcinkiewicz interpolation theorem can not provide us the optimal $\ell^p$ norm $(2+o(1))c_d$. And one also can obtain explicit upper bounds for $\|R_{dis}^{(k)}\|_{\ell^1(\mathbb{Z}^d)\rightarrow \ell^{1,\infty}(\mathbb{Z}^d)}$ and $\|R_{dis}^{(k)}\|_{\ell^p(\mathbb{Z}^d)\rightarrow \ell^p(\mathbb{Z}^d)}$ in our proof.
\end{remark}
\vskip0.3cm
Let us give a brief overview over the main ideas of the proof of Theorems. Its basic strategy is to consider the continuous-discrete operator $\tilde{R}_{\text{dis}}^{(1)}$ related to the discrete Riesz transforms $R_{dis}^{(1)}$ ($k=1$), which has the same operator norm with $R_{dis}^{(k)}$, see Lemma \ref{c-d lemma} (it was proved in \cite{BKK}) and Lemma \ref{equ c-d}. The ``continuous-discrete operator'' approach is  a useful tool to bound the norm of the continuous version by that of its discrete version, see also \cite{La}, \cite{Pie}.

For the $\ell^2$ estimate, we consider the Fourier multiplier of $\tilde{R}_{\mathrm{dis}}^{(1)}$, which is
    \begin{equation*}
        m_1(\xi) =\mathrm{p.v.} \sum_{m \in \mathbb{Z}^d \setminus \{0\}}c_d\frac{m_1}{|m|^{d+1}}e^{-2\pi i m \cdot \xi}.
    \end{equation*}
One of the crucial observations, which allow us to estimate the lower and upper bounds for the $L^{\infty}$ norm of $m_1(\xi)$, is that we use the identity
\begin{equation}\label{eq:gamma_identity}
    \frac{1}{|x|^{d+1}} = \frac{1}{\Gamma\left(\frac{d+1}{2}\right)} \int_0^{\infty} t^{\frac{d-1}{2}} e^{-t|x|^2} \mathrm{d}t
\end{equation}
and rewrite
$$ m_1(\xi)= -i \frac{1}{\pi^{\frac{d+1}{2}}} \int_0^{\infty} t^{\frac{d-1}{2}} \sum_{m \in \mathbb{Z}^d \setminus \{0\}} m_1 e^{-t|m|^2} \sin(2\pi m \cdot \xi) \mathrm{d}t.$$
Then the Poisson summation formula will help us to estimate the kernel
$$\sum_{m \in \mathbb{Z}^d \setminus \{0\}} m_1 e^{-t|m|^2} \sin(2\pi m \cdot \xi).$$

For the $\ell^{1,\infty}$ and $\ell^{p}$ estimate, from \cite{BKK} we know that $\tilde{R}_{\text{dis}}^{(1)}$ is equal to the truncated Riesz Transform
$R_1^{(1)}$ plus a error term. The difficult is to obtain the optimal estimate of the operator norm of the error term. Fortunately, the identity (\ref{eq:gamma_identity}) again can
help us overcome the difficult. And we believe that our strategy can be adopted to investigate the operator norm of second order even higher order discrete Riesz Transforms, which we
are working on.

\medskip

This paper is organized as follows. Section 2 is devoted to proving the optimal dimension-dependent $\ell^2$ norm of the discrete Riesz transforms. In Section 3, we will establish the optimal dimension-dependent $\ell^{1,\infty}$ and $\ell^p$ estimates of the discrete Riesz transforms. In Section 4, we will provide some lemmas which will be needed in the proof of the main Theorems.

\section{The $l^2$ estimate}
In this section, we will prove the optimal dimension-dependent $\ell^2$ norm of the discrete Riesz transforms $R_{dis}^{(k)}$. Without lose of generality, we may assume $k = 1$.

Define the continuous-discrete operator $\tilde{R}_{\mathrm{dis}}^{(1)}$
$$\tilde{R}_{\text{dis}}^{(1)}(F)(x)=\sum_{n \in \mathbb{Z}^d \setminus \{0\}}K_1(n)F(x-n),F \in L^p(\mathbb{R}^d),x \in \mathbb{R}^d,$$
where $K_1(n)=c_d\frac{n_1}{|n|^{d+1}}$. Ba\~{n}uelos, Kim and Kwa\'{s}nicki \cite{BKK} established the operator norm of $R_{dis}^{(1)}$ and $\tilde{R}_{\mathrm{dis}}^{(1)}$  are the same in the following lemma (In fact, they considered a more general case).

\begin{lemma}[\cite{BKK}]\label{c-d lemma}
    For the discrete Riesz transform $R_{\mathrm{dis}}^{(1)}$ and $1 < p < \infty$, the following identity holds
    \begin{equation*}
        \|R_{\mathrm{dis}}^{(1)}\|_{\ell^p(\mathbb{Z}^d) \to \ell^p(\mathbb{Z}^d)} = \|\tilde{R}_{\mathrm{dis}}^{(1)}\|_{L^p(\mathbb{R}^d) \to L^p(\mathbb{R}^d)}.
    \end{equation*}
\end{lemma}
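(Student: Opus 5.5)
We need to prove Lemma \ref{c-d lemma}: the $\ell^p(\mathbb{Z}^d)$ norm of the convolution operator with kernel $K_1$ equals its $L^p(\mathbb{R}^d)$ norm acting by translations on functions of a continuous variable. We prove the two inequalities separately; both rest on the fact that $\tilde{R}^{(1)}_{\mathrm{dis}}$ commutes with the structure $\mathbb{R}^d=\mathbb{Z}^d\times[0,1)^d$. To avoid convergence issues, we work first with finitely supported sequences and compactly supported bounded functions. The kernel $K_1(n)$ is in $\ell^q$ for every $q>1$ but not absolutely summable, so we also truncate it to $|n|\le N$. The truncated operators converge strongly on $\ell^2$, because the principal-value symbol is bounded, which is how \cite{BKK} defines the operator. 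Every inequality below is proved uniformly for the truncated kernels and then passed to the limit via Fatou's lemma.

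\emph{Step 1: $\|\tilde{R}^{(1)}_{\mathrm{dis}}\|_{L^p\to L^p}\le \|R^{(1)}_{\mathrm{dis}}\|_{\ell^p\to\ell^p}$.} Fix $F\in L^p(\mathbb{R}^d)$. For each $y\in[0,1)^d$ define the sequence $f_y(n)=F(n+y)$. Then
\[
\tilde{R}^{(1)}_{\mathrm{dis}}F(n+y)=R^{(1)}_{\mathrm{dis}}f_y(n).
\]
Summing over $n$ and integrating over $y$ with Fubini gives
\[
\|\tilde{R}^{(1)}_{\mathrm{dis}}F\|_{L^p}^p=\int_{[0,1)^d}\|R^{(1)}_{\mathrm{dis}}f_y\|_{\ell^p}^p\,dy\le \|R^{(1)}_{\mathrm{dis}}\|^p\int_{[0,1)^d}\|f_y\|_{\ell^p}^p\,dy=\|R^{(1)}_{\mathrm{dis}}\|^p\|F\|_{L^p}^p .
\]

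\emph{Step 2: the reverse inequality.} Given a finitely supported $f$ on $\mathbb{Z}^d$, we need an $F$ whose $L^p$ ratio nearly reproduces the $\ell^p$ ratio of $f$. The natural first try is the step function $F(x)=\sum_m f(m)\mathbf 1_{m+[0,1)^d}(x)$. With this choice $\tilde{R}^{(1)}_{\mathrm{dis}}F$ is again a step function with values $R^{(1)}_{\mathrm{dis}}f(m)$, since translations by integers preserve the cube decomposition. Hence $\|\tilde R F\|_{L^p}=\|Rf\|_{\ell^p}$ and $\|F\|_{L^p}=\|f\|_{\ell^p}$, which gives equality of the ratios directly.

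The only genuinely delicate point is therefore not the algebra but the definition and convergence of the operators for general $L^p$ functions. We handle this by the truncation argument described above and by density of finitely supported sequences and of simple functions. The uniform bounds pass to the limit, which yields equality of the two norms.
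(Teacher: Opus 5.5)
Your proof is correct and is the same argument the paper uses for its $\ell^1\to\ell^{1,\infty}$ analogue, Lemma \ref{equ c-d}; the $\ell^p$ statement itself is only quoted from \cite{BKK}. Slicing $F$ along the cosets $y+\mathbb{Z}^d$ gives $\|\tilde R^{(1)}_{\mathrm{dis}}\|\le\|R^{(1)}_{\mathrm{dis}}\|$, and extending $f$ by constants on unit cubes gives the reverse inequality.

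The truncation/Fatou scaffolding is unnecessary, and routed literally through truncated kernels Step 1 would only give a bound by $\liminf_N\|R_N\|$ rather than $\|R^{(1)}_{\mathrm{dis}}\|$. Since $|K_1(n)|\le c_d|n|^{-d}$ lies in $\ell^{p'}$, H\"older makes every series converge absolutely for $f\in\ell^p$ and for almost every fiber $f_y$ of $F\in L^p$. So both of your steps hold verbatim for the untruncated operators.
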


By this Lemma, we only need to consider the $L^2$ norm of the continuous-discrete operator $\tilde{R}_{\mathrm{dis}}^{(1)}$. Thus we can take the advantage of the Fourier multiplier of $\tilde{R}_{\mathrm{dis}}^{(1)}$. So we need the following lemma, which is well-known (see \cite{StWe}).
\begin{lemma}
Suppose that \(T\) is a convolution operator with the form
\[
Tf(n) = \sum_{m \in \mathbb{Z}^d} f(m)K(n-m).
\]
Then \(T\) is bounded on \(\ell^2(\mathbb{Z}^d)\) if and only if there exists \(m \in L^\infty([0,1]^d)\) such
\[
\hat{Tf}(\xi) = m(\xi)\hat{f}(\xi), \quad \xi \in [0,1]^d.
\]
Furthermore,
\[
\|T\|_{\ell^2(\mathbb{Z}^d) \to \ell^2(\mathbb{Z}^d)} = \|m\|_{L^\infty([0,1]^d)}.
\]
\end{lemma}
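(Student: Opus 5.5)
This lemma is the standard Plancherel characterization of $\ell^2$-bounded convolution operators on $\mathbb{Z}^d$ (Stein--Weiss), and I would prove it by transferring everything to the torus. Identify $\ell^2(\mathbb{Z}^d)$ with $L^2([0,1]^d)$ through the Fourier series $\hat f(\xi)=\sum_{n}f(n)e^{-2\pi i n\cdot\xi}$. By Parseval this map is unitary.

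\emph{Sufficiency.} Suppose $\widehat{Tf}=m\hat f$ with $m\in L^\infty([0,1]^d)$. Then Parseval gives
\[
\|Tf\|_{\ell^2}=\|m\hat f\|_{L^2([0,1]^d)}\le \|m\|_{L^\infty}\|f\|_{\ell^2},
\]
so $T$ is bounded and $\|T\|_{\ell^2\to\ell^2}\le\|m\|_{L^\infty}$.

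\emph{Necessity.} Suppose $T$ is bounded on $\ell^2(\mathbb{Z}^d)$.
\begin{enumerate}
\item Apply $T$ to the delta sequence $\delta_0$. This gives $K=T\delta_0\in\ell^2$.
\item Define $m=\hat K\in L^2([0,1]^d)$.
\item For finitely supported $f$, the sum defining $Tf$ is finite, and the convolution theorem for Fourier series gives $\widehat{Tf}=m\hat f$.
\item Finitely supported sequences are dense in $\ell^2$ and $T$ is bounded, so the identity $\widehat{Tf}=m\hat f$ extends to all $f\in\ell^2$.
\end{enumerate}

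\emph{Showing $m\in L^\infty$ with $\|m\|_\infty\le\|T\|$.} This is the only step that needs an argument.
\begin{enumerate}
\item Argue by contradiction: suppose the set $E=\{|m|>\|T\|+\varepsilon\}$ has positive measure.
\item Take $g=\mathbf 1_E$ and choose trigonometric polynomials $\hat f_j$ with $\hat f_j\to g$ in $L^2$.
\item Boundedness of multiplication by $m$ on the dense set of trigonometric polynomials means the operator $g\mapsto mg$ extends continuously to $L^2$, and on its domain it agrees with pointwise multiplication by $m$.
\item Then
\[
\|mg\|_{L^2}=\lim_j \|Tf_j\|_{\ell^2}\le \|T\|\,\|g\|_{L^2}.
\]
\item On the other hand, $\|mg\|_{L^2}^2\ge (\|T\|+\varepsilon)^2|E|$ by the definition of $E$. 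This contradicts the previous bound.
\end{enumerate}
Alternatively, one can test the inequality $\int|m|^2|h|^2\le\|T\|^2\int|h|^2$ for all $h\in L^2$ with $mh\in L^2$, for example $h=\mathbf 1_{E\cap\{|m|\le N\}}$. This avoids any domain subtleties.

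\emph{Equality of norms.} The two directions combine to give $\|T\|_{\ell^2\to\ell^2}=\|m\|_{L^\infty([0,1]^d)}$.

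The main care point is the necessity direction. There one must make sense of $\widehat{Tf}=m\hat f$ for general $f\in\ell^2$ before knowing $m\in L^\infty$. I would handle this by working with finitely supported $f$ and then using density together with the test functions $\mathbf 1_{E\cap\{|m|\le N\}}$, which lie in $L^2$ and have bounded product with $m$.
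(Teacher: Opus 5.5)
Your proof is correct. It is the standard Plancherel argument: identify $\ell^2(\mathbb{Z}^d)$ with $L^2([0,1]^d)$, note $K=T\delta_0\in\ell^2$, set $m=\hat K$, check $\widehat{Tf}=m\hat f$ for finitely supported $f$, extend by density, and test on $\hat f=\mathbf 1_E$ to get $\|m\|_{L^\infty}\le\|T\|$.

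The paper gives no proof of this lemma; it only cites Stein--Weiss as well known, so there is no competing argument to compare against. Two details deserve to be written out explicitly.

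\begin{enumerate}
\item Since $m=\hat K$ is a priori only in $L^2$, multiplication by $m$ is not known in advance to be continuous on $L^2$. You therefore need the a.e.-subsequence argument to show that the $L^2$-continuous extension of $\hat f\mapsto m\hat f$ really is pointwise multiplication by $m$. This same argument is what justifies your density step.
\item Any admissible $m$ must equal $\widehat{T\delta_0}$ a.e., because $\hat\delta_0\equiv 1$. So the norm identity refers to a uniquely determined multiplier.
\end{enumerate}

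Because $m\in L^2$ already gives $m\mathbf 1_E\in L^2$, the truncation to $\{|m|\le N\}$ in your alternative is harmless but unnecessary.
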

By Lemma \ref{multiplier} in Appendix we know that the Fourier multiplier of $\tilde{R}_{\mathrm{dis}}^{(1)}$ is
    \begin{equation*}
        m_1(\xi) = \mathrm{p.v.} \sum_{m \in \mathbb{Z}^d \setminus \{0\}} K_1(m)e^{-2\pi i m \cdot \xi}=\mathrm{p.v.} \sum_{m \in \mathbb{Z}^d \setminus \{0\}}c_d\frac{m_1}{|m|^{d+1}}e^{-2\pi i m \cdot \xi},~~~\xi \in [0, 1]^d.
    \end{equation*}
We want to point out that when $d=1$, the multiplier is
$$-\mathrm{p.v.}\sum\limits_{n \in \mathbb{Z} \setminus \{0\}}\frac{2i}{\pi n}\sin(2\pi n \xi),$$
which is equal to $-i\frac{\xi}{|\xi|}(1-2|\xi|)$ and it's $L^{\infty}$ norm is $1$. But for the higher dimension, it seems impossible to obtain such a clear formula for $m_1(\xi)$ and it
is not easy to estimate the $L^{\infty}$ norm.

In order to establish the upper and lower bounds of $\|m_1(\xi)\|_{L^\infty}$, we need to rewrite the form of $m_1(\xi)$.
Using the fact that $K_1(-m)=K_1(m)$ and identity (\ref{eq:gamma_identity}), we obtain
\begin{equation*}
 \begin{aligned}
    m_1(\xi) &= -i \sum_{m \in \mathbb{Z}^d \setminus \{0\}} K_1(m)\sin(2\pi m \cdot \xi).\\
    &= -i \sum_{m \in \mathbb{Z}^d \setminus \{0\}} c_d \frac{1}{\Gamma\left(\frac{d+1}{2}\right)} \left( \int_0^{\infty} t^{\frac{d-1}{2}} m_1 e^{-t|m|^2} \mathrm{d}t \right) \sin(2\pi m \cdot \xi) \\
    &= -i \frac{1}{\pi^{\frac{d+1}{2}}} \int_0^{\infty} t^{\frac{d-1}{2}} \sum_{m \in \mathbb{Z}^d \setminus \{0\}} m_1 e^{-t|m|^2} \sin(2\pi m \cdot \xi) \mathrm{d}t,
\end{aligned}
\end{equation*}
where the last equality holds because of Lemma \ref{form of m1} in Appendix.
Let us denote
\begin{equation*}
\begin{aligned}
    S_1(t, x) &= \sum_{m_1 \neq 0} m_1 e^{-tm_1^2} \sin(2\pi m_1 x), \\
    S_2(t, x) &= \sum_{m_2 \in \mathbb{Z}} e^{-tm_2^2} \cos(2\pi m_2 x),
\end{aligned}
\end{equation*}
and
$$\displaystyle S(t, \xi) = \sum_{m \in \mathbb{Z}^d \setminus \{0\}} m_1 e^{-t|m|^2} \sin(2\pi m \cdot \xi).$$
By the fact
\begin{equation*}
\begin{cases}
    e^{-t|m|^2} = e^{-t m_1^2}e^{-t m_2^2}\cdots e^{-t m_d^2}, \\
    \sin(2\pi m \cdot \xi) = \sin(2\pi m_1\xi_1 + 2\pi m_2\xi_2 + \cdots + 2\pi m_d\xi_d),
\end{cases}
\end{equation*}
and the parity properties of the sine and cosine functions, we have
\begin{equation*}
\begin{aligned}
    S(t, \xi) &= \left( \sum_{m_1 \neq 0} m_1 e^{-tm_1^2} \sin(2\pi m_1 \xi_1) \right) \left( \sum_{m_2 \in \mathbb{Z}} e^{-tm_2^2} \cos(2\pi m_2 \xi_2) \right) \\
    &\quad  \cdots  \left( \sum_{m_d \in \mathbb{Z}} e^{-tm_d^2} \cos(2\pi m_d \xi_d) \right).
\end{aligned}
\end{equation*}
Thus the Fourier multiplier $m_1(\xi)$ can be expressed as
\begin{equation*}
    m_1(\xi) = -i \frac{1}{\pi^{\frac{d+1}{2}}} \int_0^{\infty} t^{\frac{d-1}{2}} S(t, \xi) \, \mathrm{d}t.
\end{equation*}
where
\begin{equation*}
    S(t, \xi) = S_1(t, \xi_1)S_2(t, \xi_2) \cdots S_2(t, \xi_d).
\end{equation*}
\subsection{Upper bound}
 In the subsection let us establish the upper bound for $\|m_1(\xi)\|_{L^{\infty}}$ when $\xi \in [0,1]^d$.
 First we write
    \begin{equation*}
    \begin{aligned}
        |m_1(\xi)| &= \frac{1}{\pi^{\frac{d+1}{2}}} \left| \int_0^{\infty} t^{\frac{d-1}{2}} S(t, \xi) \, \mathrm{d}t \right| \\
        &\le \frac{1}{\pi^{\frac{d+1}{2}}} \left( \int_0^1 t^{\frac{d-1}{2}} |S(t, \xi)| \, \mathrm{d}t + \int_1^{\infty} t^{\frac{d-1}{2}} |S(t, \xi)| \, \mathrm{d}t \right),
    \end{aligned}
    \end{equation*}
then we will estimate the kernel $|S(t, \xi)|$ on $(0,1]$ and $[1,\infty)$ respectively.

Since
\begin{equation*}
     S_2(t, x) = \sum_{m\in\mathbb{Z}} e^{-tm^2} \cos(2\pi mx) = \sum_{m\in\mathbb{Z}} e^{-tm^2} e^{2\pi imx}.
\end{equation*}
By the Poisson summation formula (see Lemma \ref{lem:poisson_sums}), we have
\begin{equation}\label{S_2}
    S_2(t, x)  = \sqrt{\frac{\pi}{t}} \sum_{k\in\mathbb{Z}} e^{-\frac{\pi^2 |x-k|^2}{t}},
\end{equation}
which will be used for the estimation when $t\in (0,1]$.

For $0 < t \leq 1$, we define
\begin{equation*}
    \rho(x) := \mathrm{dist}(x, \mathbb{Z}) = \min(x, 1-x) \in \left[0, \frac{1}{2}\right],
\end{equation*}
and denote
\begin{equation*}
    \rho_1 = \rho(\xi_1), \quad \rho_2 = \rho(\xi_2), \quad \cdots, \quad \rho_d = \rho(\xi_d).
\end{equation*}
Let $k_0$ be the nearest integer to $x$. For $k \neq k_0$, we have
\begin{equation*}
    |x - k| \geq |k - k_0| - \rho(x) \geq \frac{1}{2}|k - k_0|.
\end{equation*}
Using the fact
$$\sum_{m=1}^{\infty}e^{-tm^2}\leq e^{-t}+\int_1^{\infty} e^{-tx^2}dx\leq e^{-t}+\int_1^{\infty}x e^{-tx^2}dx=(1+\frac{1}{2t})e^{-t}~~\text{for}~~~ t>0,$$
we can obtain when $t\in (0,1]$
\begin{align}\label{est S_2}
    \sum_{k \neq k_0} e^{-\frac{\pi^2 |x-k|^2}{t}} &\leq \sum_{\ell \neq 0} e^{-\frac{\pi^2 \ell^2}{4t}} \leq 2 \left( 1 + \frac{2t}{\pi^2} \right) e^{-\frac{\pi^2}{4t}} \leq 2 \left( 1 + \frac{2}{\pi^2} \right) e^{-\frac{\pi^2}{4t}}.
\end{align}
Then by (\ref{S_2}), (\ref{est S_2}) and the definition of $\rho(x)$, there holds
\begin{equation}\label{est of S2}
    |S_2(t, x)| \leq \sqrt{\pi} \left(3 + \frac{4}{\pi^2}\right) \frac{1}{\sqrt{t}} e^{-\frac{\pi^2\rho^2(x)}{t}} ~~~\text{for}~~~t\in (0,1].
\end{equation}

Next we will estimate $ S_1(t, x)$. By Lemma \ref{lem:poisson_sums} we have
\begin{align*}
    S_1(t, x) = \sqrt{\frac{\pi}{t}} \sum_{k\in\mathbb{Z}} \left(\frac{\pi(x-k)}{t}\right) e^{-\frac{\pi^2(x-k)^2}{t}}.
\end{align*}
Hence
\begin{equation*}
    |S_1(t, x)| \leq \frac{\pi^{3/2}}{t^{3/2}} \left| \sum_{k\in\mathbb{Z}} (x-k) e^{-\frac{\pi^2(x-k)^2}{t}} \right|.
\end{equation*}
Let $g(u)=u e^{\frac{-\pi^2u^2}{t}}$, then we can check that
\begin{equation}\label{est g prime}
|g^{\prime}(u)|\leq \left(1+\frac{2\pi^2u^2}{t}\right)e^{-\frac{\pi^2u^2}{t}}\leq 2e^{-\frac{\pi^2u^2}{2t}},
\end{equation}
where we use the fact $(1+2x)e^{-x}\leq 2e^{-\frac{x}{2}}$ for $x>0$.
Recall that $k_0$ denote the nearest integer to $x$ then we have
\begin{align*}
\sum_{k\in\mathbb{Z}} (x-k) e^{-\frac{\pi^2(x-k)^2}{t}}&= \sum_{j\in\mathbb{Z}} (\rho(x) - j)e^{-\frac{\pi^2(\rho(x) - j)^2}{t}}\\
&=\rho(x)e^{-\frac{\pi^2\rho(x)^2}{t}}+\sum_{j=1}^{\infty}\left[g(j+\rho(x))-g(j-\rho(x))\right].
\end{align*}
By the Mean Value Theorem, (\ref{est g prime}) and the fact $j-\rho(x)\geq j-\frac{1}{2}$, we get
$$|g(j+\rho(x))-g(j-\rho(x))|\leq 4\rho(x)e^{-\frac{\pi^2(j-1/2)^2}{2t}}.$$
Careful calculation implies
\begin{align*}
 \sum_{j=1}^{\infty}e^{-\frac{\pi^2(j-1/2)^2}{2t}}&=e^{-\frac{\pi^2}{8t}} \sum_{j=1}^{\infty} e^{-\frac{\pi^2(j^2-j)}{2t}}
\leq e^{-\frac{\pi^2}{8t}}(1+\sum_{j=2}^{+\infty} e^{-\pi^2(j-1)}) \\
&\leq e^{-\frac{\pi^2}{8t}}(1+\frac{e^{-\pi^2}}{1 - e^{-\pi^2}}) \leq 2e^{-\frac{\pi^2}{8t}}\leq e^{-\frac{\pi^2\rho(x)^2}{2t}}.
\end{align*}
Finally, by the above estimates we obtain that for any $t \in (0, 1]$,
\begin{equation}\label{est of S1}
|S_1(t, x)| \leq \frac{9\pi^{3/2}}{t^{3/2}} \rho(x) e^{-\frac{\pi^2\rho(x)^2}{2t}}.
\end{equation}

Now denote
\begin{equation*}
    I_{\mathrm{small}} = \frac{1}{\pi^{\frac{d+1}{2}}} \int_0^1 t^{\frac{d-1}{2}} |S(t, \xi)| \, dt.
\end{equation*}
where
\begin{equation*}
    S(t, \xi) = S_1(t, \xi_1)S_2(t, \xi_2) \cdots S_2(t, \xi_d).
\end{equation*}
By the above estimates (\ref{est of S2}) and (\ref{est of S1}) for $S_2(t, x)$ and $S_1(t, x)$ respectively, there holds
\begin{align}\label{est I small}\nonumber
    I_{\mathrm{small}} &\leq 9\pi^{\frac{1}{2}}(3+\frac{4}{\pi^2})^{d-1}\rho_1 \int_0^1 t^{-\frac{3}{2}} e^{-\frac{\pi^2(\rho_1^2 + \cdots + \rho_d^2)}{2t}} \, dt \\\nonumber
    &= 9\pi^{\frac{1}{2}}(3+\frac{4}{\pi^2})^{d-1}\frac{\rho_1}{\sqrt{\rho_1^2 + \cdots + \rho_d^2}} \int_{\rho_1^2 + \cdots + \rho_d^2}^{+\infty} t^{-\frac{1}{2}} e^{-\frac{\pi^2}{2}t} \, dt \\
    &\leq 9\sqrt{2}(3+\frac{4}{\pi^2})^{d-1}.
\end{align}

Thus we only need to estimate $S_1(t,x)$ and $S_2(t,x)$ for $t \ge 1$. Concerning $S_1(t, x)$ we find that
\begin{equation*}
\begin{aligned}
    |S_1(t, x)| &= \left| \sum_{m_1 \neq 0} m_1 e^{-tm_1^2} \sin(2\pi m_1 x) \right| \le 2e^{-t}+2 \sum_{m_1=2}^{\infty} m_1 e^{-tm_1^2}\\
    &\le 2(e^{-t} +2e^{-4t}+ \int_2^{\infty} xe^{-tx^2} \, \mathrm{d}x) = 2(e^{-t} + 2e^{-4t}+\frac{1}{2t}e^{-4t})\\
    & \le 2e^{-t}+5e^{-4t},
    \end{aligned}
\end{equation*}
when $t\geq 1$. As for $S_2(t, x)$,
\begin{equation*}
\begin{aligned}
    |S_2(t, x)| &\le 1+2e^{-t}+2\sum_{m_2=2}^{\infty} e^{-tm_2^2} \le 1+2e^{-t}+2 \sum_{m_2=2}^{\infty} m_2 e^{-tm_2^2} \\
    &\leq 1+2e^{-t}+2(e^{-4t} + \int_2^{\infty} xe^{-tx^2} \, \mathrm{d}x)\\
    &=1+2e^{-t}+2(e^{-4t} + \frac{1}{2t}e^{-4t})\le 1+2e^{-t}+3e^{-4t} ,\quad t \ge 1.
\end{aligned}
\end{equation*}
In fact, we also can obtain $ |S_2(t, x)|\leq 1+e^{-t}(2+\frac{1}{t})\le 1+3e^{-t}$ , for $t\geq 1$.

Using the fundamental inequality

\[
(1+x)^{d-1}
\le
1+(d-1)x+\frac{d^2}{2}(1+x)^{d-3} x^2,~~\text{for}~~x>0
\]
and the above estimates, we have
\begin{align*}
    S(t,\xi)&=S_1(t, \xi_1)S_2(t, \xi_2) \cdots S_2(t, \xi_d)\\
    &\le \bigl(2e^{-t}+5e^{-4t}\bigr)
\bigl(1+2e^{-t}+3e^{-4t}\bigr)^{d-1}\\
    &\le
2e^{-t}
+4(d-1)e^{-2t}
+6(d-1)e^{-5t}  \\
&
\quad+9d^2e^{-3t}
\bigl(1+3e^{-t}\bigr)^{d-3}
 +5e^{-4t}
\bigl(1+3e^{-t}\bigr)^{d-1}.
\end{align*}
Careful calculation implies that
\begin{align}\label{I large}\nonumber
    I_{\mathrm{large}} &= \frac{1}{\pi^{\frac{d+1}{2}}} \int_1^{\infty} t^{\frac{d-1}{2}} |S(t, \xi)| \, \mathrm{d}t\\
    &\le c_d\left(2+\frac{2\sqrt{2}(d-1)}{2^{\frac{d}{2}}}
+\frac{6(d-1)}{5^{\frac{d+1}{2}}}\right)+c_d R_1(d)+c_d R_2(d),
\end{align}
where
\[
 R_1(d)
:=
\frac{9d^2}{\Gamma\left(\frac{d+1}{2}\right)}
\int_1^\infty
t^{\frac{d-1}{2}}e^{-3t}
\bigl(1+3e^{-t}\bigr)^{d-3}\,dt,
\]
and
\[
 R_2(d)
:=
\frac{5}{\Gamma\left(\frac{d+1}{2}\right)}
\int_1^\infty
t^{\frac{d-1}{2}}e^{-4t}
\bigl(1+3e^{-t}\bigr)^{d-1}\,dt .
\]
It remains to estimate the two remainder terms. Splitting the integral into two parts and using the fact $(1+3e^{-t})^{d-3}\leq (e^{3e^{-t}})^{d-3}\leq e^{3de^{-t}} \leq e^{\frac{3}{d}}$ for $t \in [2\ln d, \infty)$, we have
\[
\begin{aligned}
\int_1^\infty
t^{\frac{d-1}{2}}e^{-3t}
\bigl(1+3e^{-t}\bigr)^{d-3}\,dt
&=
\int_1^{2\ln d}
t^{\frac{d-1}{2}}e^{-3t}
\bigl(1+3e^{-t}\bigr)^{d-3}\,dt   \\
&\quad
+
\int_{2\ln d}^{\infty}
t^{\frac{d-1}{2}}e^{-3t}
\bigl(1+3e^{-t}\bigr)^{d-3}\,dt  \\
&\le
(2\ln d)^{\frac{d+1}{2}}
\left(1+\frac3e\right)^d
+
e^{\frac{3}{d}}\frac{\Gamma\left(\frac{d+1}{2}\right)}
{3^{\frac{d+1}{2}}}.
\end{aligned}
\]
Similarly,
\[
\begin{aligned}
\int_1^\infty
t^{\frac{d-1}{2}}e^{-4t}
\bigl(1+3e^{-t}\bigr)^{d-1}\,dt
&\le
(2\ln d)^{\frac{d+1}{2}}
\left(1+\frac3e\right)^d
+
e^{\frac{3}{d}}\frac{\Gamma\left(\frac{d+1}{2}\right)}
{4^{\frac{d+1}{2}}}.
\end{aligned}
\]
Therefore,
\begin{align}\label{R1}
 R_1(d)
\le
\frac{9d^2(2\ln d)^{\frac{d+1}{2}}
\left(1+\frac3e\right)^d}{\Gamma\left(\frac{d+1}{2}\right)}
+
9d^2e^{\frac{3}{d}}\frac{1}
{3^{\frac{d+1}{2}}},
\end{align}

and
\begin{align}\label{R2}
R_2(d)
\le
\frac{5(2\ln d)^{\frac{d+1}{2}}
\left(1+\frac3e\right)^d}{\Gamma\left(\frac{d+1}{2}\right)}
+
5e^{\frac{3}{d}}\frac{1}
{4^{\frac{d+1}{2}}}.
\end{align}

Consequently, by(\ref{est I small}), (\ref{I large}), (\ref{R1}) and (\ref{R2}) we can obtain
\begin{align}\label{explicite upper bound}\nonumber
    ||m_1(\xi)||_{L^\infty}
&\le
c_d
\left(
2+\frac{2\sqrt2(d-1)}{2^{\frac{d}{2}}}
+\frac{6(d-1)}{5^{\frac{d+1}{2}}}+9d^2e^{\frac{3}{d}}\frac{1}
{3^{\frac{d+1}{2}}}+5e^{\frac{3}{d}}\frac{1}
{4^{\frac{d+1}{2}}}
\right)\\
&\quad +c_d\left(\frac{9d^2(2\ln d)^{\frac{d+1}{2}}
\left(1+\frac3e\right)^d}{\Gamma\left(\frac{d+1}{2}\right)}+\frac{5(2\ln d)^{\frac{d+1}{2}}
\left(1+\frac3e\right)^d}{\Gamma\left(\frac{d+1}{2}\right)}\right)+9\sqrt{2}(3+\frac{4}{\pi^2})^{d-1},
\end{align}
which implies
$$\|R_{\mathrm{dis}}^{(1)}\|_{\ell^2(\mathbb{Z}^d) \to \ell^2(\mathbb{Z}^d)} \le 2c_d\left(1+(\sqrt{2}+o(1))\frac{d}{2^{\frac{d}{2}}}\right).$$

\subsection{Lower bound}
We already know that $\sqrt{2}c_d$ is a lower bound of $ \|R_{\mathrm{dis}}^{(k)}\|_{\ell^2(\mathbb{Z}^d) \to \ell^2(\mathbb{Z}^d)}$ in Section 1. In this subsection, we provide a better one by investigating the lower bound of $\|m_1(\xi)\|_{L^{\infty}}$ for $\xi \in [0,1]^d$, which is asymptotically optimal.

Recall that
\begin{equation*}
    |m_1(\xi)| = \left| \frac{1}{\pi^{\frac{d+1}{2}}} \int_0^{\infty} t^{\frac{d-1}{2}} S(t, \xi) \, \mathrm{d}t \right|.
\end{equation*}
From the continuity of $m_1(\xi)$,we can choose $\xi_0 = \left(\frac{1}{4}, 0, 0, \dots, 0\right) \in [0, 1]^d$,  so we have
\begin{equation*}
    S(t, \xi_0) = S_1\left(t, \frac{1}{4}\right)[S_2(t, 0)]^{d-1},
\end{equation*}
where
\begin{equation*}
    S_2\left( t,0 \right) =\sum_{m\in \mathbb{Z}}{e}^{-tm^2}=1+2e^{-t}+2e^{-4t}+\cdots \ge 1+2e^{-t},\quad \text{for\,\,all\,\,}t>0.
\end{equation*}
and
\begin{equation*}
    S_1\left(t, \frac{1}{4}\right) = \sum_{m \neq 0} m e^{-tm^2} \sin\left(\frac{\pi m}{2}\right)=2 \sum_{k=0}^{\infty} (-1)^k (2k+1)e^{-t(2k+1)^2}.
\end{equation*}
By Lemma \ref{lem:poisson_sums} we know
$$S_1\left(t, \frac{1}{4}\right)=\frac{\pi}{4t} \sqrt{\frac{\pi}{t}} \sum_{k=0}^{\infty} (-1)^k (2k+1) e^{-\frac{\pi^2(2k+1)^2}{16t}}.$$
 We can check that $b_k(t)=e^{-\frac{\pi^2(2k+1)^2}{16t}}$ is strictly monotonically decreasing with respect to $k$, and $b_k(t) \to 0$ as $k \to \infty$ for $t\in (0,1]$. Similarly, we obtain
$$S_1\left(t, \frac{1}{4}\right) = \frac{\pi}{4t}\sqrt{\frac{\pi}{t}}[(b_0(t) - b_1(t)) + (b_2(t) - b_3(t)) + \cdots] > 0~~\text{for}~~~t\in (0,1].$$

For $t\geq \frac{1}{50}$, it is easy to check that $a_k(t)=(2k+1)e^{-t(2k+1)^2}$ is strictly monotonically decreasing with respect to $k~(k \ge 2)$ and $a_k(t) \to 0$ as $k \to \infty$.
Thus
\begin{equation*}
\begin{aligned}
    S_1\left(t, \frac{1}{4}\right) &> 2(a_0(t) - a_1(t)) = 2(e^{-t} - 3e^{-9t}), \quad t \ge \frac{1}{50}.
\end{aligned}
\end{equation*}
On the other hand, when $0< t < \frac{1}{50}$, $2(e^{-t} - 3e^{-9t})<0<S_1\left(t, \frac{1}{4}\right).$
Thus we know that  when $t > 0$,
$$S_1\left(t, \frac{1}{4}\right) > 2(e^{-t} - 3e^{-9t}),$$
which implies
 $$S(t, \xi_0) > 2\left( e^{-t}-3e^{-9t} \right) \left( 1+2e^{-t} \right) ^{d-1}\ge
2e^{-t}+4\left( d-1 \right) e^{-2t}-6e^{-9t}-12\left( d-1 \right) e^{-10t}.$$
Therefore
$$\begin{aligned}
    |m_1\left( \xi _0 \right) |= \frac{1}{\pi ^{\frac{d+1}{2}}}\int_0^{\infty}{t}^{\frac{d-1}{2}}S\left( t,\xi _0 \right) \,\text{d}t \geq \frac{1}{\pi ^{\frac{d+1}{2}}} \int_0^{\infty}{t^{\frac{d-1}{2}}}\varphi \left( t \right) dt,
\end{aligned}$$
where\begin{equation*}
    \varphi \left( t \right) \,:=2e^{-t}+4\left( d-1 \right) e^{-2t}-6e^{-9t}-12\left( d-1 \right) e^{-10t}.
\end{equation*}

Then we can obtain a lower bound for $|m_1(\xi_0)|$,
$$
\begin{aligned}
	|m_1\left( \xi _0 \right) |\ge 2c_d\left( 1+\frac{\sqrt{2}\left( d-1 \right)}{2^{\frac{d}{2}}}-\frac{1}{3^d}-\frac{6\left( d-1 \right)}{10^{\frac{d+1}{2}}}\right) .\\
\end{aligned}
$$
Therefore
$$\|R_{dis}^{(k)}\|_{\ell^2(\mathbb{Z}^d)\rightarrow \ell^2(\mathbb{Z}^d)}\geq 2c_d\left( 1+\frac{\sqrt{2}\left( d-1 \right)}{2^{\frac{d}{2}}}-\frac{1}{3^d}-\frac{6\left( d-1 \right)}{10^{\frac{d+1}{2}}} \right),$$
which complete the proof of Theorem \ref{l2}.

\section{The $\ell^{1,\infty}$ and $\ell^p$ estimates}
In this section, we will prove the optimal dimension-dependent $\ell^{1,\infty}$ and $\ell^p$ estimates of the discrete Riesz transforms $R_{dis}^{(k)}$. Without lose of generality, we may assume $k = 1$. We will still adopt the ``continuous-discrete operator'' approach. Recall that
the continuous-discrete operator $\tilde{R}_{\mathrm{dis}}^{(1)}$
$$\tilde{R}_{\text{dis}}^{(1)}(F)(x)=\sum_{n \in \mathbb{Z}^d \setminus \{0\}}K_1(n)F(x-n),F \in L^p(\mathbb{R}^d),x \in \mathbb{R}^d,$$
where $K_1(n)=c_d\frac{n_1}{|n|^{d+1}}$.
For $1<p<\infty$, Ba\~{n}uelos, Kim and Kwa\'{s}nicki \cite{BKK} have proved that
\begin{equation*}
        \|R_{\mathrm{dis}}^{(1)}\|_{\ell^p(\mathbb{Z}^d) \to \ell^p(\mathbb{Z}^d)} = \|\tilde{R}_{\mathrm{dis}}^{(1)}\|_{L^p(\mathbb{R}^d) \to L^p(\mathbb{R}^d)}.
\end{equation*}
Here we can prove that the analogue for $p=1$ remains true.
\begin{lemma}\label{equ c-d}

For the discrete Riesz transforms $R_{dis}^{(1)}$ and the continuous-discrete operator $\tilde{R}_{\mathrm{dis}}^{(1)}$
there holds
\[
    \|\tilde{R}_{\mathrm{dis}}^{(1)}\|_{L^1(\mathbb R^d)\to L^{1,\infty}(\mathbb R^d)}
    =
    \|R_{dis}^{(1)}\|_{\ell^1(\mathbb Z^d)\to \ell^{1,\infty}(\mathbb Z^d)} .
\]
\end{lemma}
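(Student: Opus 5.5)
We prove the two inequalities separately, following the standard transference argument used for Lemma \ref{c-d lemma}. Write $A=\|R_{dis}^{(1)}\|_{\ell^1\to\ell^{1,\infty}}$ and $\tilde A=\|\tilde{R}_{\mathrm{dis}}^{(1)}\|_{L^1\to L^{1,\infty}}$. The starting point is that $\tilde{R}_{\mathrm{dis}}^{(1)}$ acts on each coset $x+\mathbb{Z}^d$ independently: for $x\in Q=[0,1)^d$ and $F\in L^1(\mathbb{R}^d)$, put $F_x(n)=F(x+n)$. Then
\[
\tilde{R}_{\mathrm{dis}}^{(1)}F(x+n)=R_{dis}^{(1)}F_x(n).
\]
By Fubini, $F_x\in\ell^1(\mathbb{Z}^d)$ for a.e. $x$, with $\int_Q\|F_x\|_{\ell^1}dx=\|F\|_{L^1}$.

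\emph{Step 1: $\tilde A\le A$.} Fix $\lambda>0$. By Fubini,
\[
\bigl|\{y\in\mathbb{R}^d:|\tilde{R}_{\mathrm{dis}}^{(1)}F(y)|>\lambda\}\bigr|=\int_Q \sharp\{n:|R_{dis}^{(1)}F_x(n)|>\lambda\}\,dx\le \int_Q \frac{A\|F_x\|_{\ell^1}}{\lambda}dx=\frac{A\|F\|_{L^1}}{\lambda}.
\]
Taking the supremum over $\lambda$ gives $\tilde A\le A$. Measurability of $x\mapsto \sharp\{\cdots\}$ is routine. The series defining $\tilde{R}_{\mathrm{dis}}^{(1)}F$ converges absolutely a.e., because $K_1\in\ell^2$ and, for $F\in L^1$, the coset sequences lie in $\ell^1\subset\ell^2$. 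So no principal value is needed.

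\emph{Step 2: $A\le\tilde A$.} Given $f\in\ell^1(\mathbb{Z}^d)$, take $F=\sum_n f(n)\chi_{n+Q_\delta}$, where $Q_\delta=[0,\delta)^d$ and $0<\delta\le 1$. Then $\|F\|_{L^1}=\delta^d\|f\|_{\ell^1}$. For $x\in Q_\delta$ we have $F_x=f$, and for $x\in Q\setminus Q_\delta$ we have $F_x=0$. Hence
\[
\bigl|\{|\tilde{R}_{\mathrm{dis}}^{(1)}F|>\lambda\}\bigr|=\delta^d\,\sharp\{n:|R_{dis}^{(1)}f(n)|>\lambda\}.
\]
The weak-type bound for $\tilde{R}_{\mathrm{dis}}^{(1)}$ gives $\delta^d\sharp\{\cdots\}\le \tilde A\delta^d\|f\|_{\ell^1}/\lambda$. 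Cancelling $\delta^d$ yields $A\le \tilde A$. In fact $\delta=1$ already suffices, since then $F_x=f$ for every $x\in Q$.

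\emph{Main obstacle.} The main point of care is that the $\ell^{1,\infty}$ quasi-norm is not a norm, so we must not pass through averaging of norms. The argument above avoids this by working with distribution functions level by level: for a fixed $\lambda$, the distribution function of $\tilde{R}_{\mathrm{dis}}^{(1)}F$ is exactly the $x$-average of the distribution functions of $R_{dis}^{(1)}F_x$, and the weak-type bound is linear in $\|F_x\|_{\ell^1}$, so it averages correctly. The only remaining issue is the a.e. well-definedness of the operators on $L^1$ and $\ell^1$. This follows from $K_1\in\ell^2$, which holds because $|K_1(n)|\lesssim|n|^{-d}$, so $|K_1(n)|^2\lesssim |n|^{-2d}$ is summable.
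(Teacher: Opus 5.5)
Your proposal is correct and follows essentially the same transference argument as the paper: the level-by-level identity $|\{|\tilde{R}_{\mathrm{dis}}^{(1)}F|>\lambda\}|=\int_Q\sharp\{n:|R_{dis}^{(1)}F_x(n)|>\lambda\}\,dx$ gives $\tilde A\le A$, and the step-function extension $F=\sum_n f(n)\chi_{n+Q}$ (your case $\delta=1$) gives $A\le\tilde A$. Your use of $Q=[0,1)^d$ instead of the paper's $[-1/2,1/2)^d$, and the optional parameter $\delta$, make no difference to the argument.
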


\begin{proof}

We first prove
\[
    \|\tilde{R}_{\mathrm{dis}}^{(1)}\|_{L^1\to L^{1,\infty}}
    \le
    \|R_{dis}^{(1)}\|_{\ell^1\to \ell^{1,\infty}} .
\]
For $F \in L^1(\mathbb R^d)$ and $x\in Q=[-1/2,1/2)^{d}$, denote
\[
    F_x(n)=F(x+n),
    \qquad n\in\mathbb Z^d .
\]
Then
\[
    \tilde{R}_{\mathrm{dis}}^{(1)}F(x+n)
    =
    \sum_{m\in\mathbb Z^d\setminus\{0\}} K_1(m)F(x+n-m)=
    R_{dis}^{(1)}(F_x)(n).
\]
For every \(\lambda>0\), we claim that
\[
\begin{aligned}
    \left|
        \left\{y\in\mathbb R^d:
        |\tilde{R}_{\mathrm{dis}}^{(1)}F(y)|>\lambda
        \right\}
    \right|
    &=
    \int_Q
        \#\left\{
            n\in\mathbb Z^d:
            |R_{dis}^{(1)}F_x(n)|>\lambda
        \right\}
    \,dx .
\end{aligned}
\]
Indeed, if we denote
\[
    E=\left\{y\in\mathbb R^d:
        | \tilde{R}_{\mathrm{dis}}^{(1)}F(y)|>\lambda
    \right\},
\]
then

\[
\begin{aligned}
    |E| &=
    \sum_{n\in\mathbb Z^d}
        \int_Q \mathbf 1_E(x+n)\,dx
    =\int_Q
        \#\{n\in\mathbb Z^d:x+n\in E\}
    \,dx ,
\end{aligned}
\]
which implies the claim.
Therefore,
\[
\begin{aligned}
    \lambda
    \left|
        \left\{y\in\mathbb R^d:
        |\tilde{R}_{\mathrm{dis}}^{(1)}F(y)|>\lambda
        \right\}
    \right|
    &=
    \lambda
    \int_Q
        \#\left\{
            n\in\mathbb Z^d:
            |R_{dis}^{(1)}F_x(n)|>\lambda
        \right\}
    \,dx                                                        \\
    &\le
    \|R_{dis}^{(1)}\|_{\ell^1\to\ell^{1,\infty}}
    \int_Q \|F_x\|_{\ell^1(\mathbb Z^d)}\,dx                    \\
    &=
    \|R_{dis}^{(1)}\|_{\ell^1\to\ell^{1,\infty}}
    \|F\|_{L^1(\mathbb R^d)} .
\end{aligned}
\]
which implies
 \[
    \|\tilde{R}_{\mathrm{dis}}^{(1)}\|_{L^1\to L^{1,\infty}}
    \le
    \|R_{dis}^{(1)}\|_{\ell^1\to \ell^{1,\infty}} .
\]
Conversely, for \(f\in\ell^1(\mathbb Z^d)\) let
\[
    F(x)
    =
    \sum_{n\in\mathbb Z^d}f(n)\mathbf 1_Q(x-n).
\]
Then $\|F\|_{L^1(\mathbb R^d)}
    =
    \|f\|_{\ell^1(\mathbb Z^d)}$
and for \(x\in n+Q\)
\[
\begin{aligned}
     \tilde{R}_{\mathrm{dis}}^{(1)}F(x)=
    \sum_{m\in\mathbb Z^d\setminus\{0\}} K_1(m)F(x-m)          =
    \sum_{m\in\mathbb Z^d\setminus\{0\}} K_1(m)f(n-m)          =
    R_{dis}^{(1)}f(n).
\end{aligned}
\]
Consequently,
\[
\begin{aligned}
    \|\tilde{R}_{\mathrm{dis}}^{(1)}F\|_{L^{1,\infty}}
    &=
    \sup_{\lambda>0}
    \lambda
    \left|
        \left\{x\in\mathbb R^d:
        |\tilde{R}_{\mathrm{dis}}^{(1)}F(x)|>\lambda
        \right\}
    \right|
    =
    \sup_{\lambda>0}
    \lambda
    \sum_{\substack{n\in\mathbb Z^d\\ |R_{dis}^{(1)}f(n)|>\lambda}}
    |Q|                                                       \\
    &=
    \sup_{\lambda>0}
    \lambda
    \#\left\{
        n\in\mathbb Z^d:
        |R_{dis}^{(1)}f(n)|>\lambda
    \right\}
    =
    \|R_{dis}^{(1)}f\|_{\ell^{1,\infty}} .
\end{aligned}
\]
It follows that
\[
\begin{aligned}
    \|R_{dis}^{(1)}f\|_{\ell^{1,\infty}}
    =
    \|\tilde{R}_{\mathrm{dis}}^{(1)}F\|_{L^{1,\infty}}
    \le
    \|\tilde{R}_{\mathrm{dis}}^{(1)}\|_{L^1\to L^{1,\infty}}
    \|F\|_{L^1(\mathbb R^d)}
    =
    \|\tilde{R}_{\mathrm{dis}}^{(1)}\|_{L^1\to L^{1,\infty}}
    \|f\|_{\ell^1(\mathbb Z^d)} ,
\end{aligned}
\]
which implies
\[
    \|\tilde{R}_{\mathrm{dis}}^{(1)}\|_{L^1(\mathbb R^d)\to L^{1,\infty}(\mathbb R^d)}
    =
    \|R_{dis}^{(1)}\|_{\ell^1(\mathbb Z^d)\to \ell^{1,\infty}(\mathbb Z^d)} .
\]
\end{proof}

Before the proof of the Theorem \ref{weak l1 and lp} we state two important Lemmas. And the first one may be of independent interest.
\begin{lemma}\label{est sum}
There is a absolute constant $\delta>0$ such that
$$\sum_{\substack{z\in\mathbb Z^d\\1\le |z|\le d^2}}
    \frac{|z_1|}{|z|^{d+1}}\leq \delta.$$
Furthermore when $d \to \infty$,
$$\sum_{\substack{z\in\mathbb Z^d\\1\le |z|\le d^2}}
    \frac{|z_1|}{|z|^{d+1}}= 2+\frac{2\left( \sqrt{2}+o\left( 1 \right) \right) d}{2^{\frac{d}{2}}} .$$
\end{lemma}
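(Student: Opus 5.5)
We follow the strategy used in Section 2 for $m_1$. Using the identity (\ref{eq:gamma_identity}) with $|z|^{-(d+1)}$ and $\Gamma(\frac{d+1}{2})=c_d\pi^{\frac{d+1}{2}}$, we write
\[
\sum_{1\le |z|\le d^2}\frac{|z_1|}{|z|^{d+1}}
\le \sum_{z\ne 0}\frac{|z_1|}{|z|^{d+1}}
=\frac{1}{\Gamma(\frac{d+1}{2})}\int_0^\infty t^{\frac{d-1}{2}}\, A(t)\,B(t)^{d-1}\,dt,
\]
where
\[
A(t)=\sum_{m\neq 0}|m|e^{-tm^2},\qquad B(t)=\sum_{m\in\mathbb Z}e^{-tm^2}=S_2(t,0).
\]
The full sum diverges, since $|z_1|/|z|^{d+1}\sim |z|^{-d}$ is not summable. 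So the truncation $|z|\le d^2$ must be kept, and the $t$-integral split accordingly. For the large-$t$ part ($t\ge 1$) we bound $A(t)\le 2e^{-t}+5e^{-4t}$ and $B(t)\le 1+2e^{-t}+3e^{-4t}$, exactly as in the upper bound for $I_{\mathrm{large}}$. Expanding as there gives
\[
\frac{1}{\Gamma(\frac{d+1}{2})}\int_1^\infty t^{\frac{d-1}{2}}A B^{d-1}\,dt
\le 2+\frac{2\sqrt2(d-1)}{2^{d/2}}+\frac{6(d-1)}{5^{\frac{d+1}{2}}}+R_1(d)+R_2(d).
\]
The estimates (\ref{R1}) and (\ref{R2}) show this is $2+2(\sqrt2+o(1))d/2^{d/2}$. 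Here we use $(1+3/e)^d(2\ln d)^{\frac{d+1}{2}}/\Gamma(\frac{d+1}{2})\to 0$ superexponentially, and $d^2 3^{-d/2}=o(d2^{-d/2})$.

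The small-$t$ part ($t\in(0,1]$) is where the truncation enters, and I expect it to be the main obstacle. One cannot use the Gaussian representation for the whole truncated sum: it would need the cutoff inside the sum, which breaks the product structure. I would instead handle the cutoff by restricting the integral rather than the sum. Write $|z|^{-(d+1)}$ via (\ref{eq:gamma_identity}) and insert the truncation $|z|\le d^2$. For $|z|\le d^2$ and $t\le 1$ we have $t^{\frac{d-1}{2}}e^{-t|z|^2}$. Bound the truncated sum over $t\in(0,1]$ by dropping the constraint to $|z|\le d^2$ only after multiplying by $e^{\epsilon}$-type factors; more concretely, use
\[
\sum_{1\le|z|\le d^2} |z_1| e^{-t|z|^2}\le \sum_{z\ne0}|z_1|e^{-t|z|^2}.
\]
Poisson summation (Lemma \ref{lem:poisson_sums}) then gives $A(t)\lesssim t^{-1}$ and $B(t)\le \sqrt{\pi/t}\,(3+4/\pi^2)$. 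This yields an integrand $\lesssim C^d t^{-3/2}$, which is not integrable at $0$, reflecting the divergence.

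So for $t\le t_0$ I would instead estimate directly. Pick $t_0=d^{-4}$. For $t\le t_0$, bound $\int_0^{t_0}t^{\frac{d-1}{2}}e^{-t|z|^2}dt\le t_0^{\frac{d+1}{2}}$, so this piece contributes at most
\[
\frac{t_0^{\frac{d+1}{2}}}{\Gamma(\frac{d+1}{2})}\sum_{|z|\le d^2}|z_1|\le \frac{d^{-2(d+1)}\,d^2\,(2d^2+1)^d}{\Gamma(\frac{d+1}{2})}\lesssim \frac{C^d}{\Gamma(\frac{d+1}{2})}.
\]
On $[t_0,1]$ use the Poisson bounds, $\int_{t_0}^1 C^d t^{-3/2}\,dt\le C^d d^2$. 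After dividing by $\Gamma(\frac{d+1}{2})$, both pieces are $O(C^dd^2/\Gamma(\frac{d+1}{2}))=o(d2^{-d/2})$. These bounds together give the absolute bound $\delta$ (for small $d$ the sum is finite anyway) and the upper asymptotic.

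For the matching lower bound, keep only $z$ with $|z|\le 2$ (allowed since $d^2\ge 2$). These are $z=\pm e_1$, contributing $2$, and $z=\pm e_1\pm e_j$ for $j\ge2$, contributing $4(d-1)2^{-\frac{d+1}{2}}=2\sqrt2(d-1)/2^{d/2}$. This gives $2+2(\sqrt2+o(1))d/2^{d/2}$.
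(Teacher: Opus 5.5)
Your proof is correct. Its skeleton is the paper's: the Gamma-function identity, a split at $t=1$, the large-$t$ part estimated exactly as $I_{\mathrm{large}}$, and the lower bound from $z=\pm e_1$ and $z=\pm e_1\pm e_j$.

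The difference is how you handle small $t$.
\begin{itemize}
\item The paper enlarges $\{1\le |z|\le d^2\}$ to $\{|z_1|\le d^2\}\times\mathbb{Z}^{d-1}$. This keeps the product structure, with a truncated first factor $D_d(t)=\sum_{k=1}^{d^2}ke^{-tk^2}$. Only that first factor carries the non-integrable $t^{-1}$ singularity, since $t^{\frac{d-1}{2}}\Theta(t)^{d-1}\le C^{d-1}$ on $(0,1]$. So $\int_0^1 D_d(t)\,dt\le\sum_{k\le d^2}k^{-1}\lesssim\log d$ gives $J_{\mathrm{small}}\lesssim (4\sqrt{\pi})^{d-1}\log d/\Gamma(\frac{d+1}{2})$ in one step. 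Your remark that keeping the cutoff ``breaks the product structure'' is therefore true only for the radial cutoff; a cutoff in $z_1$ alone is enough.
\item You instead add a second cutoff $t_0=d^{-4}$, matched to the radius $d^2$. Below $t_0$ you count lattice points, which gives $(2+d^{-2})^d/\Gamma(\frac{d+1}{2})$. On $[t_0,1]$ you drop the truncation and pay only $\log(1/t_0)$, or $t_0^{-1/2}$ with your cruder bound.
\end{itemize}
Both routes make the small-$t$ contribution super-exponentially small, hence $o(d2^{-d/2})$. Yours uses nothing beyond counting; the paper's avoids the second cutoff and gives a single clean estimate.

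Some cosmetic fixes are needed.
\begin{itemize}
\item Your first display bounds the truncated sum by $\sum_{z\neq 0}$, which is $+\infty$. Drop it; what you actually use is the pointwise bound $\sum_{1\le|z|\le d^2}|z_1|e^{-t|z|^2}\le A(t)B(t)^{d-1}$ for $t\ge t_0$.
\item On $[t_0,1]$ the integrand is really $\lesssim C^d t^{-1}$, not $t^{-3/2}$. The bound $A(t)\le 3/t$ comes from comparing with $\int_0^\infty xe^{-tx^2}\,dx$. It does not come from the Poisson summation lemma, which treats the signed sum $S_1$.
\item Delete the two garbled sentences in the small-$t$ paragraph (the one about ``$e^{\epsilon}$-type factors'' and the incomplete one before it).
\item In the lower bound, say you keep only $z=\pm e_1$ and $z=\pm e_1\pm e_j$. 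The set $\{|z|\le 2\}$ contains further points, which is harmless by nonnegativity of the terms.
\end{itemize}
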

\begin{proof}
Denote
$$B_d=\sum_{\substack{z\in\mathbb Z^d\\1\le |z|\le d^2}}
    \frac{|z_1|}{|z|^{d+1}}.$$
It is obviously that
$$B_d\geq 2+\sum_{\substack{z_1=\pm 1\\ |z|= \sqrt{2}}}\frac{|z_1|}{|z|^{d+1}}=2+\frac{2\sqrt{2} (d-1)}{2^{\frac{d}{2}}}.$$ For the upper bound we will adopt the same strategy as we did in Section 2. Using the identity $\frac{1}{|z|^{d+1}}=\frac{1}{\Gamma\left(\frac{d+1}{2}\right)}
    \int_0^{\infty}t^{\frac{d-1}{2}}e^{-t|z|^2}\,dt $ we have
\begin{align*}
    B_d
     & =
    \sum_{\substack{z\in\mathbb Z^d\\1\le |z|\le d^2}}
    \frac{|z_1|}{|z|^{d+1}}
    =
    \sum_{\substack{z\in\mathbb Z^d\\1\le |z|\le d^2}} |z_1|\frac{1}{\Gamma\left(\frac{d+1}{2}\right)}
    \int_0^{\infty}t^{\frac{d-1}{2}}e^{-t|z|^2}\,dt                     \\
    & \le
    \frac{1}{\Gamma\left(\frac{d+1}{2}\right)}
    \int_0^{\infty}t^{\frac{d-1}{2}}
    \left(\sum_{z_1=-d^2}^{d^2}|z_1|e^{-tz_1^2}\right)
    \left(\sum_{k=-\infty}^{\infty}e^{-tk^2}\right)^{d-1}\,dt           \\
    & =
    \frac{2}{\Gamma\left(\frac{d+1}{2}\right)}
    \int_0^{\infty}t^{\frac{d-1}{2}}D_d(t)\Theta(t)^{d-1}\,dt,
\end{align*}
where
\[
    \Theta(t):=\sum_{k=-\infty}^{\infty}e^{-tk^2},
    \qquad
    D_d(t):=\sum_{k=1}^{d^2}ke^{-tk^2}.
\]
Set
\[
    J_{\mathrm{small}}
    =
    \frac{2}{\Gamma\left(\frac{d+1}{2}\right)}
    \int_0^1 t^{\frac{d-1}{2}}D_d(t)\Theta(t)^{d-1}\,dt,
\]
and
\[
    J_{\mathrm{large}}
    =
    \frac{2}{\Gamma\left(\frac{d+1}{2}\right)}
    \int_1^{\infty} t^{\frac{d-1}{2}}D_d(t)\Theta(t)^{d-1}\,dt.
\]

For $t\in(0,1]$, the Poisson summation formula (see Lemma \ref{lem:poisson_sums}) reveals
\[
    \Theta(t)
    =\sqrt{\frac{\pi}{t}}
    \left(1+2\sum_{n=1}^{\infty}e^{-\frac{\pi^2n^2}{t}}\right)
    \le
    4\sqrt{\frac{\pi}{t}}
    \left(1+e^{-\frac{\pi^2}{t}}\right).
\]
Consequently,
\begin{align*}
    J_{\mathrm{small}}
    \le
    2\frac{(4\sqrt{\pi})^{d-1}}{\Gamma(\frac{d+1}{2})}
    \biggl(
    \int_0^1 D_d(t)\,dt
    +
    \int_0^1
    \left[
    \left(1+e^{-\frac{\pi^2}{t}}\right)^{d-1}-1
    \right]
    D_d(t)\,dt
    \biggr).
\end{align*}
For the first term,
\[
    \int_0^1D_d(t)\,dt
    =\sum_{k=1}^{d^2}k\int_0^1e^{-tk^2}\,dt
    \le \sum_{k=1}^{d^2}k\cdot\frac{1}{k^2}
    \le 2\log(d^2).
\]
For the second term, notice that for $t\in(0,1]$
\[
    D_d(t)\leq \sum_{a=1}^{\infty}ae^{-ta^2}
    \le \left(1+\frac1{2t}\right)e^{-t}
    \le \frac{2}{t}.
\]
then we can obtain
\begin{align*}
    &\int_0^1
    \left[
    \left(1+e^{-\frac{\pi^2}{t}}\right)^{d-1}-1
    \right]D_d(t)\,dt
     \le
    2\int_0^1
    \left[
    \left(1+e^{-\frac{\pi^2}{t}}\right)^{d-1}-1
    \right]\frac{dt}{t}                                                 \\
    & =
    2\int_0^1
    \sum_{m=1}^{d-1}\binom{d-1}{m}e^{-\frac{m\pi^2}{t}}\frac{dt}{t}
    \le
    2\sum_{m=1}^{d-1}\binom{d-1}{m}e^{-m\pi^2}           \le 2(1+e^{-\pi^2})^{d-1}.
\end{align*}
Combining the
above estimates gives
\begin{align}\label{est J small}
    J_{\mathrm{small}}
    \le
    \frac{(4\sqrt{\pi})^{d-1}}{\Gamma(\frac{d+1}{2})}
    4\log(d^2)
    +
    \frac{(4\sqrt{\pi})^{d-1}}{\Gamma(\frac{d+1}{2})}
    4(1+e^{-\pi^2})^{d-1}.
\end{align}

For $J_{\mathrm{large}}$, since $ D_d(t)
    \le \sum\limits_{k=1}^{\infty}ke^{-tk^2}$ we can adopt the same estimation for $I_{\mathrm{large}}$ in section 2 and obtain
 \begin{align}\label{est J large}\nonumber
    J_{large}
&\le
2+\frac{2\sqrt2(d-1)}{2^{\frac{d}{2}}}
+\frac{6(d-1)}{5^{\frac{d+1}{2}}}+9d^2e^{\frac{3}{d}}\frac{1}
{3^{\frac{d+1}{2}}}+5e^{\frac{3}{d}}\frac{1}
{4^{\frac{d+1}{2}}}\\
&\quad +\frac{9d^2(2\ln d)^{\frac{d+1}{2}}
\left(1+\frac3e\right)^d}{\Gamma\left(\frac{d+1}{2}\right)}+\frac{5(2\ln d)^{\frac{d+1}{2}}
\left(1+\frac3e\right)^d}{\Gamma\left(\frac{d+1}{2}\right)}.
 \end{align}
By (\ref{est J small}) and (\ref{est J large}), we have
\begin{align}\label{est Bd}
    B_d=2+\frac{2\left( \sqrt{2}+o\left( 1 \right) \right) d}{2^{\frac{d}{2}}},
\end{align}
which complete the proof of the Lemma.
\end{proof}
For $z\in\mathbb Z^d$, denote
\[
    \widetilde K_1^*(z)
    :=\int_Q\int_Q
    \bigl(K_1^*(z+s-t)-K_1^*(z)\bigr)\,dt\,ds ,
\]
where
\[
    K_1^*(x)=c_d\frac{x_1}{|x|^{d+1}}\chi_{\{|x|\ge 1\}}.
\]
We provide the $\ell^1$ estimate for $\widetilde K_1^*(z)$ in the following Lemma.
\begin{lemma}\label{l1 est}
There is a absolute constant $\eta>0$ such that
\[
    \|\widetilde K_1^*\|_{\ell^1(\mathbb Z^d)}\le \eta c_d .
\]
Furthermore when $d \to \infty$,
\[
    \|\widetilde K_1^*\|_{\ell^1(\mathbb Z^d)}\leq \left(2+\frac{2\left( \sqrt{2}+o\left( 1 \right) \right) d}{2^{\frac{d}{2}}}\right) c_d .
\]
\end{lemma}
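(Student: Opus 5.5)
Since $\int_Q\int_Q dt\,ds=1$, we have
\[
\widetilde K_1^*(z)=\int_Q\int_Q K_1^*(z+s-t)\,dt\,ds-K_1^*(z)=(K_1^*\ast\psi)(z)-K_1^*(z),
\]
where $\psi=\mathbf 1_Q\ast\mathbf 1_{-Q}$ is the tent density supported in $[-1,1]^d$. The plan is to split $\mathbb Z^d$ into the near region $|z|\le d^2$ (more precisely, a region containing all $z$ with $|z|\lesssim \sqrt d$ where $\psi$-smearing hits the truncation sphere $|x|=1$), and the far region $|z|>d^2$. In each region we bound $|\widetilde K_1^*(z)|\le \int\!\!\int |K_1^*(z+s-t)|+|K_1^*(z)|$ only where necessary, and otherwise use cancellation via the mean value theorem.

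\textbf{Far region.} For $|z|>d^2$ we have $|s-t|\le\sqrt d\ll |z|$. The mean value theorem gives $|K_1^*(z+u)-K_1^*(z)|\le |u|\sup|\nabla K_1^*|\lesssim c_d\sqrt d\,(d+1)|z|^{-d-1}(1+\sqrt d/|z|)^{d+1}$. Since $|z|>d^2$, the last factor is $O(1)$. The sum $\sum_{|z|>d^2}|z|^{-d-1}$ is bounded by comparison with the integral $\int_{|x|>d^2-\sqrt d}|x|^{-d-1}dx\sim \omega_{d-1}/d^2$, where $\omega_{d-1}=2\pi^{d/2}/\Gamma(d/2)$ is the surface area. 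Multiplying by $d^{3/2}$ gives something like $d^{-1/2}\omega_{d-1}$, and $\omega_{d-1}\to0$ superexponentially. So this contribution is $o(c_d\cdot d2^{-d/2})$, and in any case $\le Cc_d$.

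\textbf{Near region.} Here I would not try to exploit cancellation; I would use the crude bound
\[
\sum_{|z|\le d^2}|\widetilde K_1^*(z)|\le \sum_{|z|\le d^2}|K_1^*(z)|+\int_Q\int_Q\sum_{|z|\le d^2}|K_1^*(z+s-t)|\,dt\,ds .
\]
The first sum equals $c_dB_d$, which by Lemma \ref{est sum} is $\le \delta c_d$ and asymptotically $c_d\bigl(2+2(\sqrt2+o(1))d2^{-d/2}\bigr)$.

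That would already exhaust the claimed constant, so the second term must be shown to be $o(c_d d2^{-d/2})$. Replacing the sum by an integral, $\sum_z\int\!\!\int|K_1^*(z+s-t)|$ equals $\int_{\mathbb R^d}|K_1^*(x)|$ restricted to $|x|\le d^2+\sqrt d$, because $\psi$ has total mass one and the lattice translates tile. This integral is $c_d\int_{1\le|x|\le 2d^2}|x_1||x|^{-d-1}dx=c_d\,\frac{\Gamma(d/2)^{-1}\cdot 2\pi^{(d-1)/2}}{\ldots}\log(2d^2)$, that is, $c_d\cdot\omega'\log d$ with $\omega'=\int_{S^{d-1}}|\theta_1|\,d\theta=2\pi^{(d-1)/2}/\Gamma((d+1)/2)=2/(\pi c_d)$. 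So this term is $O(\log d)$, which is $o(c_d d2^{-d/2})$ because $c_d2^{-d/2}$ grows superexponentially.

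Combining the two regions gives both the absolute bound $\eta c_d$ (since $c_d\ge c>0$ for all $d\ge2$, the $O(\log d)$ term is absorbed) and the asymptotic bound.

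The main obstacle is making the far-region mean-value estimate uniform in $d$, in particular controlling the factor $(1+\sqrt d/|z|)^{d+1}$ and the lattice-to-integral comparison near the truncation. If the cutoff $d^2$ is insufficient, I would move it to $d^3$ and re-run Lemma \ref{est sum}, whose proof is insensitive to the cutoff because $\log(d^3)$ is still negligible.
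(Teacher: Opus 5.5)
Your proposal is correct and follows the paper's proof essentially step for step. The paper also splits at $|z|=d^2$; in the far region it uses the mean value theorem together with the lattice-sum-to-integral comparison. In the near region it uses the triangle inequality, which yields $c_dB_d$ (handled by Lemma \ref{est sum}) plus $\int\Phi\,|K_1^*|$, and the partition of unity $\sum_{r}\Phi(x-r)=1$ turns the latter into $\frac{2}{\pi}\log(d^2+\sqrt d)$.

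Two cosmetic fixes. On the segment the factor should be $(1-\sqrt d/|z|)^{-(d+1)}$ rather than $(1+\sqrt d/|z|)^{d+1}$; it is still $O(1)$ for $|z|>d^2$. Also, absorbing the $\log d$ term into $\eta c_d$ needs $\log d/c_d$ to stay bounded, which holds because $c_d\to\infty$, not merely because $c_d\ge c>0$.
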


\begin{proof}

We split the sum into two parts according to the size of $z$.
First $|z|>d^2$,  by the mean value theorem and the fact $|z+\theta(s-t)|\geq (1-\frac{\sqrt{d}}{d^2})|z|$ for $s,t\in Q$ and some $0<\theta<1$, we have
\begin{align}\label{est of K1}\nonumber
    \bigl|\widetilde K_1^*(z)\bigr|
    & =
    \left|
    \int_Q\int_Q
    \bigl(K_1^*(z+s-t)-K_1^*(z)\bigr)\,dt\,ds
    \right|                                      \\\nonumber
    & \le
    \int_Q\int_Q
    |s-t|\,\bigl|\nabla K_1(z+\theta(s-t))\bigr|\,dt\,ds                  \\\nonumber
    & \le
    c_d(d+2)
    \frac{1}{\left(1-\frac{\sqrt d}{d^2}\right)^{d+1}}
    \frac{1}{|z|^{d+1}}
    \int_Q\int_Q |s-t|\,dt\,ds                                          \\
    & \leq
    2e^4c_d d^{3/2}\frac{1}{|z|^{d+1}} .
\end{align}

Next let us estimate $\sum\limits_{|z|>d^2}|z|^{-d-1}$.  First we claim that
\[
    \sum_{|z|>d^2}\frac1{|z|^{d+1}}
    \le
    e\int_{|x|>d^2/2}\frac{dx}{|x|^{d+1}}.
\]
Set $Q_z=z+\left[-\frac12,\frac12\right)^d$. If $x\in Q_z$, then $|x-z|\le \sqrt d/2$. Since $|z|>d^2$ then
\[
    |x|\le |z|\left(1+\frac{\sqrt d}{2|z|}\right)
    \le (1+\frac{1}{2d^{3/2}})|z|.
\]
It follows that for $x\in Q_z$
\[
    \frac1{|z|^{d+1}}
    \le \frac{e}{|x|^{d+1}}.
\]
Integrating over $Q_z$ gives
\[
    \frac1{|z|^{d+1}}
    \le e\int_{Q_z}\frac{dx}{|x|^{d+1}}.
\]
Summing over $|z|>d^2$, we complete the proof of the claim
\begin{equation}\label{est of sum}
    \sum_{|z|>d^2}\frac1{|z|^{d+1}}
    \le
    e\int_{\bigcup\limits_{|z|>d^2}Q_z}\frac{dx}{|x|^{d+1}}
    \le
    e\int_{|x|>d^2/2}\frac{dx}{|x|^{d+1}}.
\end{equation}
Consequently, by (\ref{est of K1}) and (\ref{est of sum}) we obtain
\begin{align}\label{est far region}\nonumber
    \sum_{|z|>d^2}\bigl|\widetilde K_1^*(z)\bigr|
    & \leq
    2e^4c_d d^{3/2}\sum_{|z|>d^2}\frac1{|z|^{d+1}}                         \\\nonumber
    & \leq
    2e^5c_d d^{3/2}|\mathbb S^{d-1}|
    \int_{d^2/2}^{\infty}\frac{r^{d-1}}{r^{d+1}}\,dr                    \\
    & =8e^5d^{-\frac{1}{2}}\frac{\pi^{\frac{d}{2}}}{\Gamma(\frac{d}{2})}c_d .
\end{align}

We now consider the complementary region $|z|\le d^2$. A change of variables reveals
\[
    \widetilde K_1^*(z)
    =\int_{[-1,1)^d}\Phi(u)
    \bigl(K_1^*(z+u)-K_1^*(z)\bigr)\,du,
\]
where
\[
    \Phi(u):=\prod_{j=1}^d(1-|u_j|)_+,
    \qquad u\in\mathbb R^d.
\]
Indeed, if $u=s-t$, then $u\in[-1,1)^d$ and $t\in Q\cap(Q-u)$.  Therefore,
for any integrable function $H$, we have
\begin{align*}
    \int_Q\int_Q H(s-t)\,ds\,dt
    & =
    \int_{[-1,1)^d}\int_{Q\cap(Q-u)}H(u)\,dt\,du                       \\
    & =
    \int_{[-1,1)^d}H(u)|Q\cap(Q-u)|\,du.
\end{align*}
Since $|Q\cap(Q-u)|=\Phi(u)$, the asserted identity follows by taking
$H(u)=K_1^*(z+u)-K_1^*(z)$. And it is easy to check the function $\Phi$ satisfies the following elementary properties
\[
    0\le \Phi\le 1,
    ~~~~
    \int_{\mathbb R^d}\Phi(u)\,du=1
    ~~~~~\text{and}~~~~
    \sum_{r\in\mathbb Z^d}\Phi(x-r)=1
    ~~~\text{for}~~~~ x\in\mathbb R^d.
\]
Hence
\[
    \bigl|\widetilde K_1^*(z)\bigr|
    \le
    \int_{\mathbb R^d}\Phi(u)|K_1^*(z+u)|\,du+|K_1^*(z)|.
\]
Summing over $|z|\le d^2$ yields
\begin{align}\label{est near region}
    \sum_{|z|\le d^2}\bigl|\widetilde K_1^*(z)\bigr|
    \le I_d+II_d,
\end{align}
where
\[
    I_d:=\sum_{|z|\le d^2}\int_{\mathbb R^d}\Phi(u)|K_1^*(z+u)|\,du,
    ~~~\text{and}~~~
    II_d:=\sum_{|z|\le d^2}|K_1^*(z)|.
\]

For $I_d$, using the fact $\sum\limits_{|z|\le d^2}\Phi(x-z)=0$ when $|x|> d^2+\sqrt d$ and $\sum\limits_{r\in\mathbb Z^d}\Phi(x-r)=1$, we can derive that
\begin{align}\label{est Id}\nonumber
    I_d
    & =
    \sum_{|z|\le d^2}\int_{\mathbb R^d}\Phi(x-z)|K_1^*(x)|\,dx=\int_{\mathbb R^d}|K_1^*(x)|
    \sum_{|z|\le d^2}\Phi(x-z)\,dx  \\\nonumber
    & \leq c_d\int_{1\le |x|\le d^2+\sqrt d}
    \frac{|x_1|}{|x|^{d+1}}\,dx =c_d\int_1^{d^2+\sqrt d}\frac{1}{\rho}
    \int_{\mathbb S^{d-1}}|\theta_1|\,d\sigma(\theta)\,d\rho   \\
     & =
    \frac{2}{\pi}\log(d^2+\sqrt d).
\end{align}
By Lemma \ref{est sum}, we know
\begin{align}\label{est IId}
II_d\leq \left(2+\frac{2\left( \sqrt{2}+o\left( 1 \right) \right) d}{2^{\frac{d}{2}}}\right)c_d.
\end{align}
Therefore, by (\ref{est far region}), (\ref{est near region}), (\ref{est Id}) and (\ref{est IId}) we have
\[
    \|\widetilde K_1^*\|_{\ell^1(\mathbb Z^d)}\leq \left(2+\frac{2\left( \sqrt{2}+o\left( 1 \right) \right) d}{2^{\frac{d}{2}}}\right) c_d,
\]
which complete the proof of the Lemma.
\end{proof}

\subsection{The $\ell^{1,\infty}$ case} Now we are in the position to prove the optimal dimension-dependent $\ell^{1,\infty}$ estimate of the discrete Riesz transforms. Since we already know $2c_d$ is a lower bound for $\|R_{dis}^{(k)}\|_{\ell^1(\mathbb{Z}^d)\rightarrow \ell^{1,\infty}(\mathbb{Z}^d)}$ in section 1, we only need to obtain the optimal upper bound.

Let \(f\in\ell^1(\mathbb Z^d)\) and
\[
    F(x)
    =
    \sum_{n\in\mathbb Z^d}f(n)\mathbf 1_Q(x-n).
\]
Thus for \(x\in n+Q\),
\[
\begin{aligned}
    \tilde{R}_{\mathrm{dis}}^{(1)}F(x)
    =
    \sum_{\substack{m\in\mathbb Z^d\\ m\neq n}}
        K_1(n-m)f(m)=\sum_{\substack{m\in\mathbb Z^d\\ m\neq n}}
        K_1^*(n-m)f(m),
\end{aligned}
\]
where
\[
    K_1^*(x)=c_d\frac{x_1}{|x|^{d+1}}\chi_{\{|x|\ge 1\}}.
\]
For the truncated  Riesz transform operator $R_1$, we have
\[
\begin{aligned}
    R_1F(x)&=\int_{\mathbb R^d}K_1^*(x-y)F(y)\,dy                             =
    \sum_{\substack{m\in\mathbb Z^d\\ m\neq n}}
        f(m)
        \int_Q K_1^*(x-m-t)\,dt .
\end{aligned}
\]

Define the error term
\[
    E(x)
    :=
    \tilde{R}_{\mathrm{dis}}^{(1)}F(x)-R_1F(x).
\]
Then by Lemma \ref{equ c-d} we obtain
\[
\begin{aligned}
    \|R_{dis}^{(1)}f\|_{\ell^{1,\infty}}
    &=
    \|\tilde{R}_{\mathrm{dis}}^{(1)}F\|_{L^{1,\infty}}       \le
    \frac{ 3^{\frac{d}{2}+2}}{3^{\frac{d}{2}+2}-d}\|E\|_{L^1}
    +
    \frac{3^{\frac{d}{2}+2}}{d}\|R_1F\|_{L^{1,\infty}}
\end{aligned}
\]
On one hand, by the fact
\[
\begin{aligned}
    \|E\|_{L^1(\mathbb R^d)}
    =
    \sum_{n\in\mathbb Z^d}\int_Q |E(n+s)|\,ds ,
\end{aligned}
\]
and
\[
\begin{aligned}
    E(n+s)
    =
    \sum_{\substack{m\in\mathbb Z^d\\ m\neq n}}
        f(m)
        \int_Q
            \bigl[
                K_1^*(n-m)-K_1^*(n+s-m-t)
            \bigr]
        \,dt ,
\end{aligned}
\]
we obtain
\[
\begin{aligned}
    \|E\|_{L^1(\mathbb R^d)}
    &\le
    \sum_{n\in\mathbb Z^d}
    \int_Q
    \sum_{\substack{m\in\mathbb Z^d\\ m\neq n}}
        |f(m)|
        \int_Q
            |K_1^*(n-m)-K_1^*(n+s-m-t)|
        \,dt\,ds                                                  \\
    &=
    \sum_{m\in\mathbb Z^d}|f(m)|
    \sum_{\substack{n\in\mathbb Z^d\\ n\neq m}}
    \int_Q\int_Q
        |K_1^*(n-m)-K_1^*(n+s-m-t)|
    \,dt\,ds                                                       \\
    &\leq
    2c_d\left(1+\frac{\left( \sqrt{2}+o\left( 1 \right) \right) d}{2^{\frac{d}{2}}}\right) \|f\|_{\ell^1(\mathbb Z^d)},
\end{aligned}
\]
where we use Lemma \ref{l1 est} to get the last equality.
On the other hand by the same proof of Janakiraman in \cite{Ja}, one can obtain that there exists $C'>0$ independent of $d$, such that
\[
\|R_1F\|_{L^{1,\infty}(\mathbb R^d)} \le C'\log d\,\|F\|_{L^1(\mathbb R^d)}.
\]

Therefore from the above estimates we conclude that
\[
    \|R_{dis}^{(1)}\|_{\ell^1\to\ell^{1,\infty}}
    =2c_d\left(1+\frac{\left( \sqrt{2}+o\left( 1 \right) \right) d}{2^{\frac{d}{2}}}\right).
\]

\subsection{The $\ell^p$ case}
Now let us prove the optimal dimension-dependent $\ell^{p}$ estimate of the discrete Riesz transforms. First we consider the lower bound, which can be derived directly from the $\ell^2-$norm of the discrete Riesz transforms. In fact since the Fourier multiplier of the continuous-discrete operator $\tilde{R}_{\mathrm{dis}}^{(1)}$
$$\tilde{R}_{\text{dis}}^{(1)}(F)(x)=\sum_{n \in \mathbb{Z}^d \setminus \{0\}}K_1(n)F(x-n),F \in L^p(\mathbb{R}^d),x \in \mathbb{R}^d,$$
is $\mathrm{p.v.} \sum\limits_{m \in \mathbb{Z}^d \setminus \{0\}}c_d\frac{m_1}{|m|^{d+1}}e^{-2\pi i m \cdot \xi}$,
then the Riesz-Thorin interpolation Theorem tell us
$$ \|\tilde{R}_{\mathrm{dis}}^{(1)}\|_{L^2(\mathbb{R}^d) \to L^2(\mathbb{R}^d)}\leq  \|\tilde{R}_{\mathrm{dis}}^{(1)}\|^{1/2}_{L^p(\mathbb{R}^d) \to L^p(\mathbb{R}^d)}
 \|\tilde{R}_{\mathrm{dis}}^{(1)}\|^{1/2}_{L^{p^{\prime}}(\mathbb{R}^d) \to L^{p^\prime}(\mathbb{R}^d)}=\|\tilde{R}_{\mathrm{dis}}^{(1)}\|_{L^p(\mathbb{R}^d) \to L^p(\mathbb{R}^d)}.$$
Thus by Theorem \ref{l2} and Lemma \ref{c-d lemma} we obtain the lower bound
$$\bigl\|R_{\mathrm{dis}}^{(k)}\bigr\|_{\ell^p(\mathbb Z^d)\to
    \ell^p(\mathbb Z^d)}\geq 2c_d\left( 1+\frac{\sqrt{2}\left( d-1 \right)}{2^{\frac{d}{2}}}-\frac{1}{3^d}-\frac{6\left( d-1 \right)}{10^{\frac{d+1}{2}}} \right).$$

For the upper bound, Ba\~{n}uelos, Kim and Kwa\'{s}nicki \cite{BKK} have proved that
$$\|R_{dis}^{(k)}\|_{\ell^p(\mathbb{Z}^d)\rightarrow \ell^p(\mathbb{Z}^d)}\leq \cot(\frac{\pi}{2p^{\ast}})+C_d,$$
and there is no more information about $C(d)$ except it is a positive constant depend on $d$. Here we can propose a more refined
estimate of this error term.

Without lose of generality, we may assume $k = 1$.
Recall that
\[
    K_1^*(x)=c_d\frac{x_1}{|x|^{d+1}}\chi_{\{|x|\ge 1\}},~~~~~~~ Q=\left[-\frac12,\frac12\right)^d.
\]
and
\[
    \widetilde K_1^*(z)
    :=\int_Q\int_Q
    \bigl(K_1^*(z+s-t)-K_1^*(z)\bigr)\,dt\,ds .
\]
By Proposition 6.1 in \cite{BKK}, there holds
\[
    \bigl\|R_{\mathrm{dis}}^{(1)}\bigr\|_{\ell^p(\mathbb Z^d)
    \to \ell^p(\mathbb Z^d)}
    \le
    \cot\left(\frac{\pi}{2p^*}\right)
    +\|\widetilde K_1^*\|_{\ell^1(\mathbb Z^d)}.
\]
Thus using Lemma \ref{l1 est} we can complete the proof.

\section{Appendix}
In this section, we will prove some lemmas which were used in our proof of the main
Theorems.
\begin{lemma}\label{multiplier}
   The Fourier multiplier of the continuous-discrete operator $\tilde{R}_{\mathrm{dis}}^{(1)}$ is
    \begin{equation*}
        m_1(\xi) = \mathrm{p.v.} \sum_{n \in \mathbb{Z}^d \setminus \{0\}} K_1(n)e^{-2\pi i n \cdot \xi}.
    \end{equation*}
\end{lemma}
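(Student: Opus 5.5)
The plan is to compute the Fourier transform of $\tilde{R}_{\mathrm{dis}}^{(1)}F$ directly for nice $F$ and identify the multiplier on all of $\mathbb{R}^d$. I take the convention $\hat F(\xi)=\int_{\mathbb{R}^d}F(x)e^{-2\pi i x\cdot\xi}\,dx$. The kernel $K_1$ is not absolutely summable, so I will work with truncations. For $N\ge1$ set
$$T_NF(x)=\sum_{0<|n|\le N}K_1(n)F(x-n).$$
This is a finite combination of translates. Hence $\widehat{T_NF}(\xi)=m_{1,N}(\xi)\hat F(\xi)$ with
$$m_{1,N}(\xi)=\sum_{0<|n|\le N}K_1(n)e^{-2\pi i n\cdot\xi},$$
because translation by $n$ multiplies $\hat F$ by $e^{-2\pi i n\cdot\xi}$. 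This trigonometric polynomial is $\mathbb{Z}^d$-periodic, so it suffices to study it on $[0,1]^d$. By oddness of $K_1$ it equals $-i\sum_{0<|n|\le N}K_1(n)\sin(2\pi n\cdot\xi)$.

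Next I pass to the limit $N\to\infty$, in two steps.

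\textbf{Operator level.} For $F\in L^2(\mathbb{R}^d)$ (say Schwartz), I show $T_NF\to\tilde R^{(1)}_{\mathrm{dis}}F$. Here the defining series for $\tilde R^{(1)}_{\mathrm{dis}}$ is interpreted as the limit of the ball truncations, which is the sense used in \cite{BKK}. For $F$ with compact support the sum at each $x$ is in fact finite up to a tail, so the pointwise limit exists.

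\textbf{Multiplier level.} I show that the $m_{1,N}$ are uniformly bounded on $[0,1]^d$ and converge pointwise a.e. to $m_1(\xi)$, the principal-value sum. Dominated convergence on the Fourier side then gives $\widehat{T_NF}\to m_1\hat F$ in $L^2$. By Plancherel, $T_NF$ converges in $L^2$ to the function with Fourier transform $m_1\hat F$. Comparing with the pointwise limit identifies $\widehat{\tilde R^{(1)}_{\mathrm{dis}}F}=m_1\hat F$.

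The main obstacle is the uniform boundedness and pointwise convergence of the spherical partial sums $m_{1,N}$. I would try first to use the lattice-cube comparison already employed in Lemma~\ref{l1 est}. Write
$$K_1(n)=\int_Q\!\int_Q K_1^*(n+s-t)\,dt\,ds-\widetilde K_1^*(n)\qquad (n\neq 0).$$
By Lemma~\ref{l1 est}, $\widetilde K_1^*\in\ell^1$, so its contribution to the partial sums converges absolutely and uniformly in $\xi$. The remaining part is a smoothed truncated Riesz kernel averaged over cubes. Its partial Fourier sums are controlled by the Fourier transform of the truncated continuous kernel $K_1^*\chi_{\{|x|\le N\}}$ times $|\hat{\mathbf 1}_Q|^2$. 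These are uniformly bounded because truncated Calderón--Zygmund kernels with the cancellation $\int_{1\le|x|\le N}K_1^*=0$ have uniformly bounded Fourier transforms, and they converge as $N\to\infty$. The mismatch between the ball truncation $|n|\le N$ and the smoothed version is supported on $N-\sqrt d\le|x|\le N+\sqrt d$. Its total mass is $O(\sqrt d/N)\to0$, so it is negligible.

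If that route is cumbersome, an alternative is Abel/Gaussian regularization. I would replace the truncation by the factor $e^{-\epsilon|n|^2}$, which is exactly the structure of Lemma~\ref{form of m1} and the theta-function representation $m_1=-i\pi^{-(d+1)/2}\int_0^\infty t^{(d-1)/2}S(t,\xi)\,dt$. I would then check that this regularized limit agrees with the principal value.
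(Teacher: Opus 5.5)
Your argument is correct, but it takes a heavier route than the paper. The paper never needs uniform boundedness or pointwise convergence of the spherical partial sums. Since $|K_1(n)|\lesssim |n|^{-d}$, we have $K_1\in\ell^2(\mathbb{Z}^d\setminus\{0\})$, and Parseval gives $\|m_{1,N}-m_1\|_{L^2([0,1]^d)}^2=\sum_{|n|>N}|K_1(n)|^2\to 0$. So the p.v. sum is understood as an $L^2(\mathbb{T}^d)$-limit. The paper then pairs with test functions and passes to the limit on both sides using only this $\ell^2$ tail, with Cauchy--Schwarz in $n$ on the operator side.

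In your framework the same shortcut reads: for Schwartz $F$,
$\int_{\mathbb{R}^d}|m_{1,N}-m_1|^2|\hat F|^2\le \|m_{1,N}-m_1\|_{L^2(\mathbb{T}^d)}^2\sum_{k\in\mathbb{Z}^d}\sup_{k+[0,1]^d}|\hat F|^2\to 0$.
So your ``main obstacle'' is not needed merely to identify the multiplier.

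What your route buys is more information. You write $K_1=(K_1^*\ast\Phi)|_{\mathbb{Z}^d}-\widetilde K_1^*$ on $\mathbb{Z}^d\setminus\{0\}$. Poisson summation then gives $\sum_n ((K_1^*\chi_{B_N})\ast\Phi)(n)e^{-2\pi i n\cdot\xi}=\sum_k \widehat{K_1^*\chi_{B_N}}(\xi+k)\,|\widehat{\mathbf{1}_Q}(\xi+k)|^2$, and $\sum_k|\widehat{\mathbf{1}_Q}(\xi+k)|^2=\Phi(0)=1$. Combined with the uniform bound for truncated Riesz multipliers and the $O_d(N^{-1})$ mass of the shell mismatch, you get that the ball partial sums are uniformly bounded and converge at every $\xi$.

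This gives the p.v. a genuine pointwise meaning and yields $L^2$-boundedness of $\tilde R^{(1)}_{\mathrm{dis}}$ directly. The paper, by contrast, obtains $m_1$ only as an $L^2$ function, and gets its boundedness and the pointwise evaluation at $\xi_0$ later from the Gaussian representation.

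The price is reliance on the standard uniform bound for truncated Calder\'on--Zygmund multipliers and a lattice-point count in shells. Note also that you only need $\widetilde K_1^*\in\ell^1$ for fixed $d$. That follows from the mean-value bound $|\widetilde K_1^*(z)|\lesssim_d |z|^{-d-1}$, not from the sharp dimensional statement of Lemma~\ref{l1 est}.
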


\begin{proof}
It suffices to prove that for every
\(\phi\in C_{c}^{\infty}([0,1]^d)\),
\[
    \left\langle\widehat{\tilde{R}_{\mathrm{dis}}^{(1)}F},\phi\right\rangle
   =
    \left\langle
        m_1\widehat F,
        \phi
    \right\rangle .
\]
Define the truncated ``continuous-discrete'' operator $\tilde{R}_{\mathrm{dis},N}^{(1)}$:
\begin{equation*}
    \tilde{R}_{\mathrm{dis},N}^{(1)}F(x) = \sum_{0 < |m| \le N} K_1(m)F(x-m), \quad F \in \mathcal{S}(\mathbb{R}^d).
\end{equation*}
then
\begin{equation*}
    \widehat{\tilde{R}_{\mathrm{dis},N}^{(1)}F}(\xi) = m_{1,N}(\xi)\widehat{F}(\xi),
\end{equation*}
where
\begin{equation*}
    m_{1,N}(\xi) = \sum_{0 < |m| \le N} K_1(m)e^{-2\pi i m \cdot \xi}.
\end{equation*}
Then have
\[
\begin{aligned}
    \left\langle m_{1,N}\widehat F,\phi\right\rangle
    -
    \left\langle m_1\widehat F,\phi\right\rangle
   &=
    \int_{[0,1]^d}
        (m_{1,N}(\xi)-m_1(\xi))
        \widehat F(\xi)\phi(\xi)
    \,d\xi \\
    & \leq \|m_{1,N}-m_1\|_{L^2([0,1]^d)}
    \|\widehat F\,\phi\|_{L^2([0,1]^d)}.
\end{aligned}
\]
Since \(K_1\in \ell^2(\mathbb Z^d\setminus\{0\})\), there holds
\[
    \|m_{1,N}-m_1\|_{L^2([0,1]^d)}^2
    =
    \sum_{|m|>N} |K(m)|^2
    \longrightarrow 0,
    \qquad\text{as } N\to\infty .
\]
Hence
\begin{align*}
    \lim_{N\to\infty}
    \left\langle m_{1,N}\widehat F,\phi\right\rangle
    =
    \left\langle m_1\widehat F,\phi\right\rangle .
\end{align*}

On the other hand we can derive that
\[
\begin{aligned}
    &\left\langle\widehat{\tilde{R}_{\mathrm{dis},N}^{(1)}F},\phi\right\rangle-\left\langle\widehat{\tilde{R}_{\mathrm{dis}}^{(1)}F},\phi\right\rangle
    =
    \int_{[0,1]^d}
        \left(
            \sum_{|m|>N} K(m)F(x-m)
        \right)
        \widehat{\phi}(x)
    \,dx \\
    &\le
    \int_{[0,1]^d}
        \left(
            \sum_{|m|>N} |K(m)|^2
        \right)^{1/2}
        \left(
            \sum_{|m|>N} |F(x-m)|^2
        \right)^{1/2}
        |\widehat{\phi}(x)|
    \,dx   \\
    &\le
    \|F\|_{L^2(\mathbb R^d)}
    \|\phi\|_{L^2([0,1]^d)}
    \left(
        \sum_{|m|>N} |K(m)|^2
    \right)^{1/2}
    \longrightarrow 0,~~~N\to\infty        .
\end{aligned}
\]
Therefore for every
\(\phi\in C_{c}^{\infty}([0,1]^d)\),
\[
    \left\langle\widehat{\tilde{R}_{\mathrm{dis}}^{(1)}F},\phi\right\rangle
   =
    \left\langle
        m_1\widehat F,
        \phi
    \right\rangle ,
\]
which complete the proof.

\end{proof}

\begin{lemma}\label{form of m1}
Let \(\xi\in [0,1]^d\). For \(m\in \mathbb Z^d\setminus\{0\}\) and \(\varepsilon>0\),
then
\[
\sum_{m\neq0}
m_1\sin(2\pi m\cdot \xi)
\int_0^\infty t^{\frac{d-1}{2}}e^{-t|m|^2}\,dt
=
\lim_{\varepsilon\to 0}
\sum_{m\neq0}
m_1\sin(2\pi m\cdot \xi)
\int_\varepsilon^\infty t^{\frac{d-1}{2}}e^{-t|m|^2}\,dt
\]
in \(L^2(\mathbb [0,1]^d)\). Moreover,
\[
\sum_{m\neq0}
m_1\sin(2\pi m\cdot \xi)
\int_0^\infty t^{\frac{d-1}{2}}e^{-t|m|^2}\,dt
=
\int_0^\infty
t^{\frac{d-1}{2}}
\sum_{m\neq0}
m_1e^{-t|m|^2}\sin(2\pi m\cdot \xi)\,dt
\]
in $[0,1]^d$.
\end{lemma}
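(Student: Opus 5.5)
We need to prove Lemma \ref{form of m1}. The statement has two parts. First, the series $\sum_{m\neq0}K_1(m)\sin(2\pi m\cdot\xi)$ is the $L^2([0,1]^d)$ limit of the $\varepsilon$-regularized series. Second, the order of summation and $t$-integration can be exchanged pointwise on $[0,1]^d$.

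For the $L^2$ statement I would work with coefficients via Parseval. Set
\[
a_m=m_1\int_0^\infty t^{\frac{d-1}{2}}e^{-t|m|^2}\,dt=\Gamma\left(\tfrac{d+1}{2}\right)\frac{m_1}{|m|^{d+1}},
\qquad
a_m^\varepsilon=m_1\int_\varepsilon^\infty t^{\frac{d-1}{2}}e^{-t|m|^2}\,dt .
\]
Then $|a_m^\varepsilon|\le |a_m|$ and $a_m^\varepsilon\to a_m$ for each $m$. Since $|a_m|\lesssim |m|^{-d}$, the sequence $(a_m)$ lies in $\ell^2(\mathbb Z^d\setminus\{0\})$, and the sine series is an orthogonal expansion in $e^{\pm 2\pi i m\cdot\xi}$. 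Hence
\[
\Big\|\sum_{m\neq0}(a_m-a_m^\varepsilon)\sin(2\pi m\cdot\xi)\Big\|_{L^2([0,1]^d)}^2\le\sum_{m\neq 0}|a_m-a_m^\varepsilon|^2 .
\]
The right-hand side tends to $0$ by dominated convergence on $\mathbb Z^d$, with dominating sequence $4|a_m|^2$. Note that for fixed $\varepsilon>0$ the series in $a_m^\varepsilon$ converges absolutely, because $|a_m^\varepsilon|\le |m_1|e^{-\varepsilon|m|^2/2}C_\varepsilon$.

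For the interchange, absolute convergence at fixed $\varepsilon$ lets Fubini give, for every $\xi$,
\[
\sum_{m\neq 0}a_m^\varepsilon\sin(2\pi m\cdot \xi)=\int_\varepsilon^\infty t^{\frac{d-1}{2}}S(t,\xi)\,dt ,
\]
where $S(t,\xi)$ is the function from Section 2. It then remains to show two things: that $\int_0^\infty t^{\frac{d-1}{2}}|S(t,\xi)|\,dt<\infty$, so that the right side converges to $\int_0^\infty t^{\frac{d-1}{2}}S(t,\xi)\,dt$ as $\varepsilon\to0$; and that the $L^2$ limit agrees with this pointwise limit. For the second point, an $L^2$ limit has a subsequence converging almost everywhere, so the two sides coincide almost everywhere, which is the sense in which $m_1$ is an $L^\infty$ multiplier.

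The main obstacle is the integrability of $S$ near $t=0$, since the $t\ge1$ part is trivially bounded by $e^{-t}$ times polynomial factors. Here I would use the Poisson-summation bounds (\ref{est of S2}) and (\ref{est of S1}), which hold on $(0,1]$. They give
\[
t^{\frac{d-1}{2}}|S(t,\xi)|\lesssim_d \rho_1\,t^{-3/2}e^{-\pi^2|\rho|^2/(2t)} .
\]
This is integrable on $(0,1]$ whenever $|\rho|>0$, and identically zero when $\rho_1=0$; the computation leading to (\ref{est I small}) even gives a uniform bound. The single point $\rho=0$ can be ignored, or handled directly since $S(t,0)=0$ there.
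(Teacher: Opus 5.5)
Your proposal is correct and follows the paper's proof: Parseval plus dominated convergence on the coefficients $b_m^\varepsilon\to b_m$ for the $L^2$ statement, then Fubini at fixed $\varepsilon>0$, then $\varepsilon\to0$. You are more careful than the paper in two places. First, you supply the pointwise convergence $\int_\varepsilon^\infty t^{\frac{d-1}{2}}S(t,\xi)\,dt\to\int_0^\infty t^{\frac{d-1}{2}}S(t,\xi)\,dt$ from the small-$t$ Poisson-summation bounds, which the paper leaves implicit. Second, you identify the two limits a.e.\ via a subsequence and stop there, whereas the paper upgrades to equality everywhere by asserting that both sides are continuous. That assertion fails at the lattice points $\{0,1\}^d$: already for $d=1$ the multiplier $-i\,\mathrm{sgn}(\xi)(1-2|\xi|)$ jumps at $\xi=0$. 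So your a.e.\ formulation is the defensible version of the statement.
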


\begin{proof}
Denote
\[
    b_m^\varepsilon
    =
    m_1\int_\varepsilon^\infty t^{\frac{d-1}{2}}e^{-t|m|^2}\,dt,
    \qquad
    b_m
    =
    m_1\int_0^\infty t^{\frac{d-1}{2}}e^{-t|m|^2}\,dt,
\]
we need to show that
\[
\sum_{m\neq0}
b_m^\varepsilon\sin(2\pi m\cdot \xi)
\to
\sum_{m\neq0}
b_m\sin(2\pi m\cdot \xi)
\]
in \(L^2(\mathbb T^d)\).

Ii is easy to check that
$b_m
    =
    \Gamma\left(\frac{d+1}{2}\right)\frac{m_1}{|m|^{d+1}}$
and $\{b_m\}\in \ell^2(\mathbb Z^d\setminus\{0\})$.
For every fixed \(m\neq0\), we clearly have
\[
    b_m^\varepsilon\to b_m,
    ~~~~\varepsilon\to 0~~~\text{and}~~~|b_m^\varepsilon-b_m|\leq |b_m|.
\]
Then the Lebesgue dominated convergence theorem implies
\[
    \sum_{m\neq0}|b_m^\varepsilon-b_m|^2\to0,
    \qquad \varepsilon\to 0.
\]
Thus by Parseval's identity, it follows that when $\varepsilon\to0$,
\[
\begin{aligned}
\left\|
\sum_{m\neq0}
(b_m^\varepsilon-b_m)\sin(2\pi m\cdot \xi)
\right\|_{L^2(\mathbb T^d)}^2
 &=
\left\|
-i\sum_{m\neq0}
(b_m^\varepsilon-b_m)e^{2\pi i m\cdot \xi}
\right\|_{L^2(\mathbb T^d)}^2  \\
&\qquad =
\sum_{m\neq0}|b_m^\varepsilon-b_m|^2
\to0.
\end{aligned}
\]
which is
\[
\sum_{m\neq0}
b_m^\varepsilon\sin(2\pi m\cdot \xi)
\to
\sum_{m\neq0}
b_m\sin(2\pi m\cdot \xi),\varepsilon\to0
\]
in \(L^2([0,1]^d)\).

For each fixed \(\varepsilon>0\). Let
\[
    F(m,t)
    =
    t^{\frac{d-1}{2}}m_1e^{-t|m|^2}\sin(2\pi m\cdot \xi),
    \quad
    m\in\mathbb Z^d\setminus\{0\},\quad t\in(\varepsilon,\infty).
\]
It is easy to check that
\[
    \sum_{m\neq0}\int_\varepsilon^\infty |F(m,t)|\,dt<\infty.
\]
Then by Fubini's theorem, there holds
\[
\sum_{m\neq0}
\int_\varepsilon^\infty F(m,t)\,dt
=
\int_\varepsilon^\infty
\sum_{m\neq0}F(m,t)\,dt,
\]
which implies
\[
\sum_{m\neq0}
m_1\sin(2\pi m\cdot \xi)
\int_\varepsilon^\infty t^{\frac{d-1}{2}}e^{-t|m|^2}\,dt
=
\int_\varepsilon^\infty
t^{\frac{d-1}{2}}
\sum_{m\neq0}
m_1e^{-t|m|^2}\sin(2\pi m\cdot \xi)\,dt.
\]
Combining this identity with the previous \(L^2\)-convergence gives
\begin{align}\label{euqality for multiplier}
\sum_{m\neq0}
m_1\sin(2\pi m\cdot \xi)
\int_0^\infty t^{\frac{d-1}{2}}e^{-t|m|^2}\,dt
=
\lim_{\varepsilon\to 0}
\int_\varepsilon^\infty
t^{\frac{d-1}{2}}
\sum_{m\neq0}
m_1e^{-t|m|^2}\sin(2\pi m\cdot \xi)\,dt
\end{align}
a.e.~~in $[0,1]^d$.
One can check that both side of the \ref{euqality for multiplier} are continuous function, which complete the proof of the Lemma.
\end{proof}

\begin{lemma}\label{lem:poisson_sums}
Let $t > 0$ and $x\in [0,1]$. Consider the series
\begin{align}
    S_1(t, x) &= \sum_{m_1 \neq 0} m_1 e^{-t m_1^2} \sin(2\pi m_1 x), \label{eq:S1_def} \\
    S_2(t, x) &= \sum_{m_2 \in \mathbb{Z}} e^{-t m_2^2} \cos(2\pi m_2 x). \label{eq:S2_def}
\end{align}
By the Poisson summation formula, these series admit the following equivalent dual representations
\begin{align}
    S_1(t, x) &= \frac{\pi^{3/2}}{t^{3/2}} \sum_{k \in \mathbb{Z}} (x - k) e^{-\frac{\pi^2 (x-k)^2}{t}}, \label{eq:S1_dual} \\
    S_2(t, x) &= \sqrt{\frac{\pi}{t}} \sum_{k \in \mathbb{Z}} e^{-\frac{\pi^2 (x-k)^2}{t}}. \label{eq:S2_dual}
\end{align}
\end{lemma}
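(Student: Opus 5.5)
We need to prove Lemma \ref{lem:poisson_sums}, i.e.\ the two dual representations. The plan is to apply the Poisson summation formula to the Gaussian and then differentiate in $x$.

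\textbf{Step 1: $S_2$.} For fixed $t>0$ and real $x$, put $f(y)=e^{-ty^2}e^{2\pi i y x}$. This is a Schwartz function, so Poisson summation $\sum_{m}f(m)=\sum_k\hat f(k)$ holds. Here $\hat f(k)=\int e^{-ty^2}e^{-2\pi i y(k-x)}dy=\sqrt{\pi/t}\,e^{-\pi^2(x-k)^2/t}$, which comes from the standard Gaussian transform $\int e^{-ty^2}e^{-2\pi i y\eta}dy=\sqrt{\pi/t}\,e^{-\pi^2\eta^2/t}$. Since $e^{-tm^2}$ is even in $m$, the imaginary parts cancel and $\sum_m e^{-tm^2}e^{2\pi i m x}=S_2(t,x)$. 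This gives (\ref{eq:S2_dual}).

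\textbf{Step 2: $S_1$.} Both series in (\ref{eq:S2_dual}) converge uniformly in $x$ together with their $x$-derivatives. The left one is dominated by $\sum |m|e^{-tm^2}$. The right one is locally dominated by Gaussian tails in $k$. We may therefore differentiate term by term in $x$. On the left,
\[
\partial_x S_2(t,x)=-2\pi\sum_m m e^{-tm^2}\sin(2\pi m x)=-2\pi S_1(t,x),
\]
since the $m=0$ term vanishes. On the right,
\[
\partial_x S_2(t,x)=\sqrt{\pi/t}\sum_k\left(-\tfrac{2\pi^2(x-k)}{t}\right)e^{-\pi^2(x-k)^2/t}.
\]
Equating the two and dividing by $-2\pi$ gives
\[
S_1(t,x)=\frac{\pi^{3/2}}{t^{3/2}}\sum_k(x-k)e^{-\pi^2(x-k)^2/t},
\]
which is (\ref{eq:S1_dual}).

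\textbf{Main obstacle.} The only delicate point is justifying the term-by-term differentiation, which the uniform convergence above handles. The paper also uses the specialization $x=\frac14$ of (\ref{eq:S1_dual}). To obtain it, pair $k$ with $1-k$: with $j=k$ for $k\le0$ and $j=-k$ for $k\ge1$, the values of $x-k$ are $\pm(2j+1)/4$ with alternating signs. This yields the alternating series in $(2j+1)e^{-\pi^2(2j+1)^2/(16t)}$ with prefactor $\frac{\pi^{3/2}}{4t^{3/2}}$, which matches the displayed formula.
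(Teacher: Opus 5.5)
Your proof is correct and is essentially the paper's: Poisson summation for the modulated Gaussian gives the dual form of $S_2$, and differentiating both representations of $S_2$ in $x$ and dividing by $-2\pi$ gives the dual form of $S_1$. Your uniform-convergence remark also supplies the justification for termwise differentiation that the paper leaves implicit. The only blemish is outside the lemma, in your aside at $x=\frac14$: the relabeling should be $l=-2k$ for $k\le 0$ and $l=2k-1$ for $k\ge 1$, so that $x-k=(-1)^l(2l+1)/4$. Your final alternating-series formula is nevertheless correct.
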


\begin{proof}
We first establish the identity \eqref{eq:S2_dual} for $S_2(t, x)$. Rewrite $S_2(t, x)$ as
\begin{equation*}
    S_2(t, x) = \sum_{m_2 \in \mathbb{Z}} e^{-t m_2^2 + 2\pi i m_2 x}.
\end{equation*}
Let $f(y) = e^{-t y^2}$ then its Fourier transform is given by
\begin{equation*}
    \hat{f}(\xi) = \int_{\mathbb{R}} e^{-t y^2 - 2\pi i \xi y} \, dy = \sqrt{\frac{\pi}{t}} e^{-\frac{\pi^2 \xi^2}{t}}.
\end{equation*}
Applying the Poisson summation formula $\sum_{m \in \mathbb{Z}} f(m) e^{2\pi i m x} = \sum_{k \in \mathbb{Z}} \hat{f}(k - x)$, we immediately obtain
\begin{equation*}
    S_2(t, x) = \sqrt{\frac{\pi}{t}} \sum_{k \in \mathbb{Z}} e^{-\frac{\pi^2 (k-x)^2}{t}} = \sqrt{\frac{\pi}{t}} \sum_{k \in \mathbb{Z}} e^{-\frac{\pi^2 (x-k)^2}{t}}.
\end{equation*}

To derive the identity \eqref{eq:S1_dual} for $S_1(t, x)$, we use the fact
\begin{equation*}
    \frac{\partial}{\partial x} S_2(t, x) = -2\pi \sum_{m_2 \in \mathbb{Z}} m_2 e^{-t m_2^2} \sin(2\pi m_2 x) = -2\pi S_1(t, x),
\end{equation*}
and
\begin{equation*}
    \frac{\partial}{\partial x} S_2(t, x) = \sqrt{\frac{\pi}{t}} \sum_{k \in \mathbb{Z}} e^{-\frac{\pi^2 (x-k)^2}{t}} \left( -\frac{2\pi^2 (x-k)}{t} \right).
\end{equation*}
Therefore
\begin{equation*}
    S_1(t, x) = \frac{\pi}{t} \sqrt{\frac{\pi}{t}} \sum_{k \in \mathbb{Z}} (x - k) e^{-\frac{\pi^2 (x-k)^2}{t}} = \frac{\pi^{3/2}}{t^{3/2}} \sum_{k \in \mathbb{Z}} (x - k) e^{-\frac{\pi^2 (x-k)^2}{t}}.
\end{equation*}
\end{proof}

\bibliographystyle{amsalpha}

\end{document}